\documentclass[preprint,11pt]{elsarticle}
\usepackage{mathrsfs}
\usepackage{amsfonts}
\usepackage{amsmath}
\usepackage{amstext}
\usepackage{stmaryrd}
\usepackage{amssymb}
\usepackage{amsthm}
\usepackage{mathrsfs}
\usepackage{amsfonts}
\usepackage{enumerate}
\usepackage{amscd}
\usepackage{indentfirst}
\usepackage{enumerate}
\usepackage{amsmath,amsfonts,amssymb,amsthm}
\usepackage{amsmath,amssymb,amsthm,amscd}
\usepackage{graphicx,mathrsfs}
\usepackage{appendix}

\topmargin        -0.08in \oddsidemargin    -0.08in \evensidemargin
-0.08in \marginparwidth 0.00in \marginparsep 0.00in

\textwidth 16.0cm \textheight 20.0cm
\pagestyle{myheadings} \markboth {\rm } {\rm Existence of Steady Multiple Vortex Patches to the Vortex-wave System}

\setcounter{equation}{0}

\numberwithin{equation}{section}

\newtheorem{theorem}{Theorem}[section]

\newtheorem{lemma}[theorem]{Lemma}

\theoremstyle{definition}

\newtheorem{definition}[theorem]{Definition}

\theoremstyle{remark}
\newtheorem{remark}[theorem]{Remark}

\begin{document}
\begin{frontmatter}

\title{Existence of Steady Multiple Vortex Patches to the Vortex-wave System
}

\author{Daomin Cao}
\ead{dmcao@amt.ac.cn}
\author{Guodong Wang}

\ead{wangguodong14@mails.ucas.ac.cn}

\begin{abstract}
In this paper we prove the existence of steady multiple vortex patch solutions to the vortex-wave system in a planar bounded domain. The construction is performed by solving a certain variational problem for the vorticity and studying its asymptotic behavior as the vorticity strength goes to infinity.
\end{abstract}
\begin{keyword}
Vortex-wave system, Vortex patch, Euler equation, Desingularization, Kirchhoff-Routh function, Variational problem
\end{keyword}
\end{frontmatter}


\section{Introduction}
The evolution of an incompressible, inviscid fluid is described by the following Euler equation
\begin{equation}\label{01}
\begin{cases}
\partial_t\mathbf{v}+(\mathbf{v}\cdot\nabla)\mathbf{v}=-\nabla P,\\
\nabla\cdot \mathbf{v}=0,
\end{cases}
\end{equation}
where $\mathbf{v}$ is the velocity field and $P$ is the pressure. In the planar case $\mathbf{v}=(\mathbf{v}_1,\mathbf{v}_2)$ and we introduce the vorticity of the fluid \[\omega:=\partial_1\mathbf{v}_2-\partial_2\mathbf{v}_1,\] then we obtain the following vorticity formulation of the Euler equation
\begin{equation}\label{02}
\partial_t\omega+\mathbf{v}\cdot\nabla\omega=0.
\end{equation}
In the whole plane the velocity can be recovered from the vorticity through the Biot-Savart law, that is,
\begin{equation}
\mathbf{v}(x,t)=K*\omega(x,t):=-\frac{1}{2\pi}\int_{\mathbb{R}^2}\frac{J(x-y)}{|x-y|^2}\omega(y,t)dy,
\end{equation}
where $J(a,b):=(b,-a)$ denotes clockwise rotation through $\frac{\pi}{2}$ for any vector $(a,b)\in \mathbb{R}^2$, and $K(x)=-\frac{1}{2\pi}\frac{Jx}{|x|^2}$ is called the Biot-Savart kernel. The vorticity equation \eqref{02} means that the vorticity $\omega$ is transported by the divergence-free velocity field $K*\omega$.

In some cases the vorticity is sharply concentrated in $N$ small disjoint regions, then its time evolution is approximately described by the point vortex model(also called the Kirchhoff-Routh equation, see \cite{L} for example)
\begin{equation}\label{03}
\frac{dx_i}{dt}=\sum_{j=1,j\neq i}^N\kappa_jK(x_i-x_j),\quad\;i=1,\cdot\cdot\cdot,N,
\end{equation}
where $x_i$ is the position of the $i-$th point vortex and $\kappa_i$ is the corresponding vorticity strength. According to the point vortex model, for each point vortex concentrated at $x_i$, the velocity it produces is $\kappa_iK(\cdot-x_i)$, and it moves by the velocity field produced by all the other $N-1$ point vortices(not included itself). The asymptotic nature of the point vortex approximation has been analyzed in many papers, see \cite{CPY,MPa,SV,T3} for example.

Now it is natural to consider the mixed problem in which the vorticity consists of a continuously distributed part and $N$ concentrated vortices. Marchioro and Pulvirenti firstly considered this problem in \cite{MPu2} and they called it the vortex-wave system. In the whole plane the vortex-wave system is written as follows
\begin{equation}\label{04}
\begin{cases}
 \partial_t{\omega}+\mathbf{v}\cdot\nabla\omega=0,
 \\ \frac{dx_i}{dt}=K*\omega(x_i,t)+\sum_{j=1,j\neq i}^N\kappa_jK(x_i-x_j),\,\,i=1,\cdot\cdot\cdot,N,
  \\ \mathbf{v}=K*\omega+\sum_{j=1}^k\kappa_jK(\cdot-x_j).
\end{cases}
\end{equation}
Throughout this paper we call $\omega$ the background vorticity. System \eqref{04} means that the background vorticity is transported by the velocity field produced by itself(the term $K*\omega$) and the $N$ point vortices(the term $\sum_{j=1}^k\kappa_jK(\cdot-x_j)$), and each point vortex moves by the velocity produced by the background vorticity(the term $K*\omega(x_i,t)$) and all the other $N-1$ point vortices(the term $\sum_{j\neq i,j=1}^N\kappa_jK(\cdot-x_j)$). By constructing Lagrangian paths Marchioro and Pulvirenti in \cite{MPu2} proved an existence theorem for initial background vorticity belonging to $L^1(\mathbb{R}^2)\cap L^\infty(\mathbb{R}^2)$. More existence and uniqueness results can be found in \cite{B,LM,LM2,Mi}.

In this paper we focus on the steady vortex-wave system in a bounded domain. In this case the Kirchhoff-Routh function plays an essential role. On the one hand, any critical point of the Kirchhoff-Routh function is a stationary point of the vortex model, and it has been shown in \cite{CPY} that if this critical point is non-degenerate, then there exists a family of steady vortex patch solutions of the Euler equation shrinking to that critical point. Similar results can also be found in \cite{CLW,SV,T,W}. On the other hand, if a family of vortex patch solutions of the Euler equation shrinks to a point, then this point must be a critical point of the Kirchhoff-Routh function, see \cite{CGPY}. In this way, the Kirchhoff-Routh function establishes connection between the Euler equation and the vortex model.

Our purpose in the present paper is to extend the result in \cite{CPY} to the vortex-wave system.
To be more precise, we prove that for any given strict local minimum point of the Kirchhoff-Routh function, there exists a family of steady vortex patch solutions to the vortex-wave system that shrinks to this point. Here by vortex patch solution we mean that the background vorticity $\omega$ is a constant on some planar set while $\omega=0$ elsewhere.

The method we use in this paper to construct steady solutions is called the vorticity method, which is first established by Arnold and further developed by many authors, see \cite{A,B2,B4,BM,EM,EM2,T,T2} for example. Roughly speaking, the vorticity method is to maximize the kinetic energy of the fluid under some constraints for the vorticity. For the Euler equation, the kinetic energy of the fluid with bounded vorticity is always finite; but for the vortex-wave system, the kinetic energy is infinite due to the existence of point vortices. To overcome this difficulty, we drop the infinite self-energy term for each point vortex. We refer the interested reader to \cite{CW2} where the energy is calculated rigorously for the vortex-wave system with a single point vortex.

It is worth mentioning that the construction in \cite{CPY} is based on a finite dimensional reduction argument. The advantage of the method in \cite{CPY} is that solutions concentrating at a given saddle point of the Kirchhoff-Routh function can be constructed.
However, non-degenerate condition of this saddle point is required in this situation. Using the vorticity method, we are able to construct solutions concentrating at a given strict local minimum point of the Kirchhoff-Routh function, no matter whether this point is degenerate or not. Another advantage of the vorticity method is that we can analyze the energy of the solution, which is helpful to prove stability, see \cite{B3,CW} for example.

This paper is organized as follows. In Section 2 we first introduce some notations and the vortex-wave system in bounded domains, then we state the main result. In Section 3, we solve a certain variational problem for the vorticity and study its asymptotic behavior. In Section 4 we prove the main result.

\section{Main result}
\subsection{Notations}

Let $D\subset\mathbb{R}^2$ be a bounded and simply connected domain with smooth boundary. The Green's function for $-\Delta$ in $D$ with zero Dirichlet data on $\partial D$ is written as follows:
\begin{equation}\label{21}
G(x,y)=\frac{1}{2\pi}\ln \frac1{|x-y|}-h(x,y), \,\,\,x,y\in D.
\end{equation}
$H(x):=\frac{1}{2}h(x,x)$ is called the Robin function. Let $k$ be a positive integer, $\kappa_i\in \mathbb{R}\setminus{\{0\}}$, $i=1,\cdot\cdot\cdot,k$, then the Kirchhoff-Routh function of $D$ is defined by
\begin{equation}\label{2-2}
\mathcal{K}_k(x_1,\cdot\cdot\cdot,x_k)=-\sum_{i\neq j, 1\leq i,j\leq k}\kappa_i\kappa_jG(x_i,x_j)+\sum_{i=1}^k\kappa_i^2h(x_i,x_i),
\end{equation}
where $x_i\in D$ and $x_i\neq x_j$ for $i\neq j$. Note that if $k=1$, then $\mathcal{K}_1=2{\kappa^2}H.$

Throughout this paper we will use the following notations. $supp(g)$ denotes the support of some function $g$, and $sgn(\cdot)$ denotes the sign function, that is, for any $a\in\mathbb{R}$,
\begin{equation}
sgn(a):=
\begin{cases}
1,\quad\,\,\,\,\,\text{if}\;a>0,\\
0,\quad\,\,\,\,\,\text{if}\;a=0,\\
-1,\,\,\,\,\,\,\text{if}\;a<0.
\end{cases}
\end{equation}
For any Lebesgue measurable set $A\subset \mathbb{R}^2$, $|A|$ denotes the two-dimensional Lebesgue measure of $A$, $I_A$ denotes the characteristic function of $A$, that is, $I_A(x)=1$ if $x\in A$ and $I_A(x)=0$ elsewhere, $\overline{A}$ denotes the closure of $A$ in the Euclidean topology,
and $diam(A)$ denotes the diameter of $A$, that is,
\begin{equation}
diam(A)=\sup_{x,y\in A}|x-y|.
\end{equation}

\subsection{Vortex-wave system in bounded domains}
Now we consider the incompressible steady flow in $D$ with impermeability boundary condition. The evolution of the velocity field $\mathbf{v}=(\mathbf{v}_1,\mathbf{v}_2)$ and the pressure $P$ confined in $D$ is described by the following Euler equation
\begin{equation}\label{22}
\begin{cases}
\partial_t\mathbf{v}+\mathbf{v}\cdot\nabla\mathbf{v}=-\nabla P,\quad\,\, \text{in}\; D\times(0,+\infty),\\
\nabla\cdot \mathbf{v}=0,\quad\quad\quad \quad\quad\quad\,\,\,\text{in}\; D\times(0,+\infty),\\
\mathbf{v}\cdot\nu=0,\quad\quad\quad \quad\quad\quad\,\,\,\,\text{on}\; \partial D\times(0,+\infty),
\end{cases}
\end{equation}
 where $\nu$ is the outward unit normal of $\partial D$.

Taking the curl on both sides of the first equation of \eqref{22}, we get the vorticity equation
 \begin{equation}\label{23}
\partial_t\omega+\mathbf{v}\cdot\nabla\omega=0.
 \end{equation}
Since $\mathbf{v}$ is divergence-free, there is a function $\psi$, which is called the stream function, such that
\begin{equation}
\mathbf{v}=J\nabla\psi=(\partial_2\psi,-\partial_1\psi).
\end{equation}
By the definition of $\omega$, it is easy to see that
\begin{equation}\label{24}
-\Delta\psi=\omega.
\end{equation}

The impermeability boundary condition given in the third equation of \eqref{22} implies that $\psi$ is a constant on each connected component of $\partial D$.  Since $D$ is simply connected, after suitably adding a constant to $\psi$ we can assume
\begin{equation}\label{25}
\psi(x,t)=0,\quad x\in\partial D.
\end{equation}
By \eqref{24} and \eqref{25}, we have
\begin{equation}
\psi(x,t)=(-\Delta)^{-1}\omega(x,t):=\int_DG(x,y)\omega(y,t)dy,\quad\;x\in D.
\end{equation}

Now we introduce the notation \[\partial(f,g):=\nabla f\cdot J\nabla g=\partial_1f\partial_2g-\partial_2f\partial_1g,\]
then the vorticity equation \eqref{23} can be written as
\begin{equation}
\begin{cases}
\partial_t\omega+\partial(\omega,\psi)=0,\\
\psi=(-\Delta)^{-1}\omega.
\end{cases}
\end{equation}

If the vorticity is a Dirac delta measure(or a point vortex) located at $x\in D$, i.e., $\omega=\delta(x)$, then formally the velocity field it produces is
\[J\nabla(-\Delta)^{-1}\delta(x)=J\nabla G(x,\cdot)=\frac{1}{2\pi}\frac{J(x-\cdot)}{|x-\cdot|^2}-J\nabla h(x,\cdot).\]
This velocity field is singular at $x$. If we drop the term $\frac{1}{2\pi}\frac{J(x-\cdot)}{|x-\cdot|^2}$, that is, we assume that the velocity at $x$ is $-J\nabla h(x,\cdot)\big|_x=-J\nabla H(x)$, then the evolution of this point vortex is described by the following ordinary differential equation:
\begin{equation}
\frac{dx}{dt}=-J\nabla H(x).
\end{equation}
Similarly, the evolution of $l$ point vortices is described by the following system of equations:
\begin{equation} \label{26}
\frac{dx_i}{dt}=-\kappa_iJ\nabla H(x_i)+\sum_{j=1,j\neq i}^l\kappa_jJ\nabla_{x_i} G(x_j,x_i),\quad\;i=1,\cdot\cdot\cdot,l,
\end{equation}
where $\kappa_i$ is the vorticity strength of the $i$-th point vortex. Equation \eqref{26} is called the Kirchhoff-Routh equation. It is easy to see that the Kirchhoff-Routh function $\mathcal{K}_l$ is exactly the Hamiltonian of the Kirchhoff-Routh equation.

Now we consider the mixed problem, that is, the vorticity consists of a continuously distributed part $\omega$ and $l$ point vortices $x_i$ with strength $\kappa_i$, $i=1,\cdot\cdot\cdot,l$, then the evolution of $\omega$ and $x_i$ will obey the following equations
\begin{equation}\label{27}
\begin{cases}
\partial_t\omega+\partial(\omega,\psi+\sum_{j=1}^l\kappa_jG(x_j,\cdot))=0\\
\frac{dx_i}{dt}=J\nabla(\psi+\sum_{j=1,j\neq i}^l\kappa_jG(x_j,\cdot)-\kappa_i H)(x_i),\,\,i=1,\cdot\cdot\cdot,l,
\end{cases}
\end{equation}
where $\psi=(-\Delta)^{-1}\omega$. \eqref{27} is called the vortex-wave system in $D$.

In this paper we restrict ourselves to the stationary case, that is, we consider the following equations
\begin{equation}\label{28}
\begin{cases}
\partial(\omega,\psi+\sum_{j=1}^l\kappa_jG(x_j,\cdot))=0,\\
\nabla(\psi+\sum_{j=1,j\neq i}^l\kappa_jG(x_j,\cdot)-\kappa_i H)(x_i)=0,\,\,i=1,\cdot\cdot\cdot,l,
\end{cases}
\end{equation}
where $\psi=(-\Delta)^{-1}\omega$.

Since we are going to deal with vortex patch solutions which are discontinuous, it is necessary to give the weak formulation of \eqref{28}.

\begin{definition}\label{2-10}
Let $\omega\in L^\infty(D)$, $x_i\in D, i=1,\cdot\cdot\cdot,l,$ then $(\omega,x_1,\cdot\cdot\cdot,x_l)$ is called a weak solution to \eqref{28} if it satisfies
\begin{equation}\label{29}
\begin{cases}
\int_\Omega\omega(x)\partial(\psi(x)+\sum_{j=1}^l\kappa_jG(x_j,x),\phi(x))dx=0,\,\,\forall \phi\in C_c^\infty(\Omega),\\
\nabla(\psi(x)+\sum_{j=1,j\neq i}^l\kappa_jG(x_j,x)-\kappa_i H(x))\big|_{x=x_i}=0,\,\,i=1,\cdot\cdot\cdot,l,
\end{cases}
\end{equation}
where $\psi=(-\Delta)^{-1}\omega$.
\end{definition}
\begin{remark}
Note that since $\omega\in L^\infty(D)$, by $L^p$ estimate $\psi\in W^{2,p}(D)$ for any $1<p<+\infty$, then by Sobolev embedding $\psi\in C^{1,\alpha}(\overline{D})$ for any $0<\alpha<1.$
\end{remark}
\begin{remark}
Definition \ref{2-10} can be derived formally from \eqref{28} by integration by parts, see \cite{CW2} for the detailed calculation.
\end{remark}

\subsection{Main result}

Our main result in this paper is the following theorem:
\begin{theorem}\label{230}
Let $k,p,l$ be positive integers such that $p+l=k$, and $\kappa_i, i=1,\cdot\cdot\cdot,k,$ be $k$ real numbers such that $\kappa_i\neq 0$. Suppose that $(\bar{x}_1,\cdot\cdot\cdot,\bar{x}_k)$ is a strict local minimum point of $\mathcal{K}_k$ defined by \eqref{2-2}, where $\bar{x}_i\in D$ and $\bar{x}_i\neq\bar{x}_j$ for $i\neq j$. Then there exists $\lambda_0>0$ such that for $\lambda>\lambda_0$, \eqref{29} has a solution $(\omega^\lambda,x_{p+1}^\lambda,\cdot\cdot\cdot,x_{k}^\lambda)$ satisfying
\begin{equation}\label{231}
\omega^\lambda=\sum_{i=1}^p\omega_i^\lambda,\quad \int_D\omega^\lambda_i=\kappa_i, \quad\omega^\lambda_i=sgn(\kappa_i)\lambda I_{A_i^\lambda},\quad i=1,\cdot\cdot\cdot,p,
\end{equation}
where $A^\lambda_i$ has the form
\begin{equation}\label{232}
A_i^\lambda=\{x\in D|sgn(\kappa_i)(\psi^\lambda(x)+\sum_{j=p+1}^k\kappa_jG(x_j^\lambda,x))>c^\lambda_i\}\cap B_{\delta}(\bar{x}_i)
\end{equation}
for some $c^\lambda_i>0$ and $\delta>0$($\delta$ is independent of $\lambda$). Moreover,
\begin{equation}\label{233}
diam(A^\lambda_i)\leq C\lambda^{-\frac{1}{2}},\; \lim_{\lambda\rightarrow+\infty}|\int_Dx\omega^\lambda_i(x)dx-\bar{x}^\lambda_i|=0, \quad i=1,\cdot\cdot\cdot,p,
\end{equation}
\begin{equation}\label{234}
\lim_{\lambda\rightarrow+\infty}|x^\lambda_j-\bar{x}_j|=0, \quad j=p+1,\cdot\cdot\cdot,k,
\end{equation}
where $C$ is a positive number not depending on $\lambda$.
\end{theorem}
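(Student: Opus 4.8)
I would produce $(\omega^\lambda,x^\lambda_{p+1},\dots,x^\lambda_k)$ as a maximizer of a renormalized kinetic energy over a constrained class and then read off the asymptotics from the fact that, after subtracting the configuration-independent divergent self-interactions of the patches, this energy agrees to leading order with $-\tfrac12\mathcal K_k$ evaluated at the centers. Since $(\bar x_1,\dots,\bar x_k)$ is a strict local minimum of $\mathcal K_k$ and $\mathcal K_k$ is continuous where the points are distinct, fix $\delta>0$ so small that the closed balls $\overline{B_{\delta}(\bar x_i)}$, $i=1,\dots,k$, are pairwise disjoint and contained in $D$, and $(\bar x_1,\dots,\bar x_k)$ is the \emph{unique} minimizer of $\mathcal K_k$ on $\prod_{i=1}^k\overline{B_{\delta}(\bar x_i)}$. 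For $\lambda$ large, setting $\psi_\omega:=(-\Delta)^{-1}\omega$, maximize
\[
E_\lambda(\omega,\mathbf x):=\frac12\int_D\psi_\omega\,\omega\,dx+\sum_{j=p+1}^{k}\kappa_j\int_D G(x_j,y)\omega(y)\,dy+\sum_{p+1\le i<j\le k}\kappa_i\kappa_j G(x_i,x_j)-\frac12\sum_{j=p+1}^{k}\kappa_j^2 h(x_j,x_j)
\]
over
\[
\mathcal A_\lambda:=\Big\{(\omega,x_{p+1},\dots,x_k):\ \omega=\sum_{i=1}^p\omega_i,\ 0\le sgn(\kappa_i)\omega_i\le\lambda,\ \int_D\omega_i=\kappa_i,\ supp(\omega_i)\subset\overline{B_{\delta}(\bar x_i)},\ x_j\in\overline{B_{\delta}(\bar x_j)}\Big\}.
\]
The last two terms of $E_\lambda$ are the interaction energy of the point vortices together with their renormalized self-energies, obtained by discarding the divergent logarithmic part of $G$ at each $x_j$; with this normalization the finite part of $E_\lambda$ on a concentrated configuration is exactly $-\tfrac12\mathcal K_k$ of its centers.

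\textbf{Existence and the level-set profile.} The $\omega$-slice of $\mathcal A_\lambda$ is convex and weak-$*$ compact in $L^\infty(D)$, the $\mathbf x$-part is compact, and $E_\lambda$ is continuous for the product topology (the quadratic term is weakly continuous since $(-\Delta)^{-1}$ is compact; the other terms involve $G$ at pairs of points, or a point and $supp(\omega)$, whose separation is bounded below on $\mathcal A_\lambda$). Hence $E_\lambda$ attains a maximum at some $(\omega^\lambda,x^\lambda_{p+1},\dots,x^\lambda_k)$, $\omega^\lambda=\sum_i\omega_i^\lambda$; write $\psi^\lambda:=(-\Delta)^{-1}\omega^\lambda$ and $W^\lambda:=\psi^\lambda+\sum_{j=p+1}^k\kappa_j G(x^\lambda_j,\cdot)$. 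With everything else frozen, $\omega_i\mapsto E_\lambda$ is convex with first variation $\int_D\zeta\,W^\lambda$, so its maximizer is an extreme (bang-bang) point of a convex set and the bathtub principle forces
\[
\omega_i^\lambda=sgn(\kappa_i)\,\lambda\,I_{A_i^\lambda},\qquad A_i^\lambda=\{\,sgn(\kappa_i)W^\lambda>c_i^\lambda\,\}\cap B_{\delta}(\bar x_i),\qquad |A_i^\lambda|=\frac{|\kappa_i|}{\lambda},
\]
for some $c_i^\lambda\in\mathbb R$. This gives \eqref{231} and \eqref{232} up to the positivity $c_i^\lambda>0$.

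\textbf{Asymptotics — the crux.} Testing $E_\lambda$ against uniform disks of mass $\kappa_i$ centered at $\bar x_i$ with point vortices at $\bar x_j$ yields $E_\lambda(\omega^\lambda,\mathbf x^\lambda)\ge\tfrac1{8\pi}\sum_i\kappa_i^2\ln\lambda-\tfrac12\mathcal K_k(\bar x_1,\dots,\bar x_k)+O(1)$, with a constant that depends only on the $\kappa_i$. Conversely a rearrangement inequality bounds each self-energy $\tfrac12\int_D\psi_i^\lambda\omega_i^\lambda$ ($\psi_i^\lambda:=(-\Delta)^{-1}\omega_i^\lambda$) by $\tfrac{\kappa_i^2}{8\pi}\ln\lambda$ plus the \emph{same} constant plus $O(1)$, while the cross and point-vortex terms stay uniformly $O(1)$; comparing the two bounds forces $\tfrac12\int_D\psi_i^\lambda\omega_i^\lambda=\tfrac{\kappa_i^2}{8\pi}\ln\lambda+O(1)$ for each $i$, hence $\sup_D|\psi_i^\lambda|=\tfrac{|\kappa_i|}{4\pi}\ln\lambda+O(1)$ and $c_i^\lambda=\tfrac{|\kappa_i|}{4\pi}\ln\lambda+O(1)$ (the upper estimate on $c_i^\lambda$ again by rearrangement; the lower one by comparing $sgn(\kappa_i)(W^\lambda-c_i^\lambda)$ on $A_i^\lambda$ with the radial solution of $-\Delta u=\lambda$ on an equimeasurable disk, together with the value of $\psi_i^\lambda$ at its extreme point, which lies in $\overline{A_i^\lambda}$ by the maximum principle). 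In particular $c_i^\lambda>0$ for $\lambda$ large. For the diameter, take $z_1,z_2\in\overline{A_i^\lambda}$ with $|z_1-z_2|=diam(A_i^\lambda)$, estimate $\psi_i^\lambda(z_1)+\psi_i^\lambda(z_2)$ from the Green representation using $|z_1-y|\,|z_2-y|\ge\tfrac12 diam(A_i^\lambda)\min(|z_1-y|,|z_2-y|)$ and a rearrangement bound on $\int_{A_i^\lambda}\ln\frac1{\min(|z_1-y|,|z_2-y|)}\,dy$, and compare with $\psi_i^\lambda(z_m)\ge c_i^\lambda-O(1)$; this gives $diam(A_i^\lambda)\le C\lambda^{-1/2}$. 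Finally, feeding $diam(A_i^\lambda)\to0$ back into the two-sided energy estimate shows $\mathcal K_k$ of the patch centers and the $x_j^\lambda$ tends to $\mathcal K_k(\bar x_1,\dots,\bar x_k)$; by uniqueness of the minimizer on the compact set $\prod\overline{B_{\delta}(\bar x_i)}$ the patch centers converge to $\bar x_i$ and $x_j^\lambda\to\bar x_j$, which is \eqref{233}–\eqref{234}. I expect this paragraph — the $O(1)$ control of $\tfrac12\int_D\psi_i^\lambda\omega_i^\lambda$ and $c_i^\lambda$, and its upgrade to the sharp rate $diam(A_i^\lambda)\le C\lambda^{-1/2}$ — to be the main obstacle.

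\textbf{From the constrained maximizer to a weak solution.} Since $diam(A_i^\lambda)\to0$ and the center of $\omega_i^\lambda$ tends to $\bar x_i$, for $\lambda$ large $\overline{A_i^\lambda}\Subset B_{\delta}(\bar x_i)$ and $x_j^\lambda\in B_{\delta}(\bar x_j)$; moreover $W^\lambda=O(1)\ll c_i^\lambda$ outside a small neighborhood of $\bar x_i$, so every ball constraint is inactive. Consequently, on $B_{\delta}(\bar x_i)$ one has $\omega^\lambda=\omega_i^\lambda=sgn(\kappa_i)\lambda I_{\{sgn(\kappa_i)W^\lambda>c_i^\lambda\}}$, a bounded monotone function of $W^\lambda$ with a Lipschitz primitive, while $\omega^\lambda\equiv0$ near each $\partial B_{\delta}(\bar x_i)$ and on the buffer region; integrating by parts ball by ball gives $\partial(\omega^\lambda,\psi^\lambda+\sum_{j=p+1}^k\kappa_j G(x^\lambda_j,\cdot))=\partial(\omega^\lambda,W^\lambda)=0$ in the weak sense of \eqref{29}. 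Since each $x_j^\lambda$ is an interior maximum of the smooth map $x_j\mapsto E_\lambda$, $\partial_{x_j}E_\lambda=0$; using $\nabla_{x_j}\!\int_D G(x_j,y)\omega^\lambda(y)\,dy=\nabla\psi^\lambda(x_j)$, the symmetry of $G$, and $\tfrac{d}{dx}h(x,x)=2\nabla H(x)$, this identity is precisely $\nabla\big(\psi^\lambda+\sum_{j'\ne j}\kappa_{j'}G(x^\lambda_{j'},\cdot)-\kappa_j H\big)(x^\lambda_j)=0$, the second equation of \eqref{29}. Combined with \eqref{231}–\eqref{234}, this proves the theorem.
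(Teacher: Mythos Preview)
Your overall architecture coincides with the paper's: maximize the same renormalized energy over the same constrained class, read off the bang--bang patch structure from first-order conditions, prove concentration by energy comparison against radial competitors and Riesz rearrangement, deduce convergence of centers from uniqueness of the local minimum of $\mathcal K_k$, and verify \eqref{29} from interior optimality. The differences are in two technical steps.

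\emph{Concentration and the thresholds $c_i^\lambda$.} The paper does not attempt to pin down each $c_i^\lambda$ to $\tfrac{|\kappa_i|}{4\pi}\ln\lambda+O(1)$. Instead it (i) first fixes $\delta<\delta_0/2$ small enough that the crude bound $c_i^\lambda>-\tfrac{|\kappa_i|}{2\pi}\ln\delta-C$ forces $sgn(\kappa_i)W^\lambda-c_i^\lambda\le0$ on $\partial B_{\delta_0}(\bar x_i)$; (ii) uses this boundary sign to integrate by parts and, via the Sobolev embedding $W^{1,1}(B_\delta)\hookrightarrow L^2(B_\delta)$, obtains $T^\lambda:=\sum_i\tfrac{1}{2}\lambda\int_{A_i^\lambda}(sgn(\kappa_i)W^\lambda-c_i^\lambda)\le C$; (iii) combines $T^\lambda\le C$ with the lower energy bound to get $\sum_i c_i^\lambda|\kappa_i|\ge\tfrac{1}{2\pi}\sum_i\kappa_i^2\ln(1/\varepsilon_i)-C$; (iv) feeds this, summed over $i$, into the Newtonian potential at an arbitrary $x\in A_i^\lambda$ to conclude $\int_{B_{R\varepsilon_i}(x)}|\omega_i^\lambda|>|\kappa_i|/2$ for $R$ large and hence $diam(A_i^\lambda)\le 2R\varepsilon_i$ (Turkington's ``half the mass nearby'' trick). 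Your route---squeeze each self-energy individually between rearrangement bounds, then bound the diameter by a direct two-point estimate using $|z_1-y|\,|z_2-y|\ge\tfrac12 diam(A_i^\lambda)\min(|z_1-y|,|z_2-y|)$---is a legitimate alternative, but the place you correctly flag as the main obstacle, namely the $O(1)$ control of $\lambda\int_{A_i^\lambda}(sgn(\kappa_i)W^\lambda-c_i^\lambda)$, is exactly what the paper's Sobolev/IBP lemma supplies without any circularity. Your ``comparison with the radial solution on an equimeasurable disk'' needs the superlevel set $\{sgn(\kappa_i)W^\lambda>c_i^\lambda\}$ to be compactly contained in some fixed ball before you know the diameter is small; the paper secures this a priori by its $\delta$-tuning step, which your set-up (choosing $\delta$ only for disjointness and uniqueness of the $\mathcal K_k$-minimum) does not yet provide.

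\emph{First equation of \eqref{29}.} The paper does not argue via $\omega^\lambda=f(W^\lambda)$; it transports $\omega^\lambda$ along the area-preserving flow $\Phi_t$ generated by $J\nabla\phi$, notes $\omega^\lambda\circ\Phi_{-t}\in N_p^\lambda$ for $|t|$ small once $\overline{A_i^\lambda}\Subset B_\delta(\bar x_i)$, and differentiates $E(\omega^\lambda\circ\Phi_{-t})+\sum_j\kappa_j(-\Delta)^{-1}(\omega^\lambda\circ\Phi_{-t})(x_j^\lambda)$ at $t=0$. Your ball-by-ball argument (that $\omega^\lambda$ is a monotone function of $W^\lambda$ on each $B_\delta(\bar x_i)$ once the truncation is inactive, whence $\partial(\omega^\lambda,W^\lambda)=0$ weakly) is also correct and slightly more direct; both approaches need, and use, the same interiority $\overline{A_i^\lambda}\Subset B_\delta(\bar x_i)$. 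The second equation is handled identically in both.
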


\begin{remark}
By \eqref{233}, we know that $A^\lambda_i$ shrinks to $\bar{x}^\lambda_i$ as $\lambda\rightarrow+\infty$, that is,
 \[\lim_{\lambda\rightarrow+\infty}\sup_{x\in A^\lambda_i}|x-\bar{x}^\lambda_i|=0,\quad i=1,\cdot\cdot\cdot,p.\]
 It is also easy to see that $\overline{A^\lambda_i}\subset B_\delta(\bar{x}_i)$ for sufficiently large $\lambda$.
\end{remark}

\section{Variational problem}

Let $(\bar{x}_1,\cdot\cdot\cdot,\bar{x}_k)$ be a strict local minimum point of $\mathcal{K}_k$, where $\bar{x}_i\in D$ and $\bar{x}_i\neq\bar{x}_j$ for $i\neq j$. Without loss of generality, we assume that $(\bar{x}_1,\cdot\cdot\cdot,\bar{x}_k)$ is the unique minimum point of $\mathcal{K}_k$ on $\overline{B_{\delta_0}(\bar{x}_1)}\times\cdot\cdot\cdot\times\overline{B_{\delta_0}(\bar{x}_k)}$, where $\delta_0>0$ is a small positive number such that $\overline{B_{\delta_0}(\bar{x}_i)}\subset D$ and $\overline{B_{\delta_0}(\bar{x}_i)}\cap\overline{B_{\delta_0}(\bar{x}_j)}=\varnothing$ for all $i,j=1,\cdot\cdot\cdot,k$ and $i\neq j$.

\begin{remark}
 We know no general result that guarantees the existence of strict local minimum point of $\mathcal{K}_k$ for $k\geq 2$. Some special cases are as follows: if $k=1$ and $D$ is convex, by \cite{CF1} $\mathcal{K}_1$ is a strictly convex function in $D$, thus has a unique minimum point; if $k\geq 2, \kappa_i>0$ for each $i$ and $D$ is convex, by \cite{GT} there does not exist any critical point of $\mathcal{K}_k$; if $k=2$, some examples of strict local minimum point of $\mathcal{K}_2$ are given computationally in \cite{EM2}. More related results can also be found in \cite{BP,BPW} and the reference therein.
\end{remark}

Let $\lambda$ be a positive real number. Define
\begin{equation}
\begin{split}
N_p^\lambda=&\{\omega\in L^\infty(D)|\;\omega=\sum_{i=1}^p\omega_i, supp(\omega_i)\subset B_\delta(\bar{x}_i), \int_D\omega_i=\kappa_i, 0\leq sgn(\kappa_i)\omega_i\leq \lambda,\, i=1,\cdot\cdot\cdot,p\},
\end{split}
\end{equation}
where $\delta<\frac{\delta_0}{2}$ is a small positive number to be determined later.

For $(\omega,x_{p+1},\cdot\cdot\cdot,x_k)\in N_p^\lambda\times \overline{B_\delta(\bar{x}_{p+1})}\times\cdot\cdot\cdot\times\overline{B_\delta(\bar{x}_k)}$, define
\begin{equation}
\begin{split}
\mathcal{E}(\omega,x_{p+1},\cdot\cdot\cdot,x_k):=&E(\omega)+\sum_{j=p+1}^k\kappa_j\psi(x_j)
+\frac{1}{2}\sum_{\begin{subarray}\,\,\,\quad i\neq j\\ p+1\leq i,j\leq k \end{subarray}}\kappa_i\kappa_j G(x_i,x_j)\\
&-\sum_{j=p+1}^k\kappa_j^2H(x_j),
\end{split}
\end{equation}
where $E(\omega):=\frac{1}{2}\int_D\int_DG(x,y)\omega(x)\omega(y)dxdy$ and $\psi(x)=(-\Delta)^{-1}\omega(x):=\int_DG(x,y)\omega(y)dy$.

Let us explain the definition of $\mathcal{E}$ briefly: the first term $E(\omega)$ represents the self-interacting energy of the background vorticity $\omega$, the second term represents the mutual interaction energy between the background vorticity and the $l$ point vortices, the third term represents the total interaction energy between any two different point vortices, and the fourth term represents the interaction energy between the $l$ point vortices and the boundary of $D$. As we have mentioned in Section 1, we have dropped the infinite self-interacting energy for each point vortex.

Now we consider the maximization of $\mathcal{E}$ on $N_p^\lambda\times \overline{B_\delta(\bar{x}_{p+1})}\times\cdot\cdot\cdot\times\overline{B_\delta(\bar{x}_k)}$.

\begin{lemma}
There exists $(\omega^\lambda, x^\lambda_{p+1},\cdot\cdot\cdot,x^\lambda_k)\in N_p^\lambda\times \overline{B_\delta(\bar{x}_{p+1})}\times\cdot\cdot\cdot\times\overline{B_\delta(\bar{x}_k)}$, such that
\begin{equation}
\mathcal{E}(\omega^\lambda,x^\lambda_{p+1},\cdot\cdot\cdot,x^\lambda_k)=\sup_{(\omega, x_{p+1},\cdot\cdot\cdot,x_k)\in N_p^\lambda\times \overline{B_\delta(\bar{x}_{p+1})}\times\cdot\cdot\cdot\times\overline{B_\delta(\bar{x}_k)}}\mathcal{E}(\omega,x_{p+1},\cdot\cdot\cdot,x_k).
\end{equation}
\end{lemma}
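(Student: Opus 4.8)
The plan is to prove existence of a maximizer by the direct method in the calculus of variations. First I would observe that the functional $\mathcal{E}$ is bounded above on the admissible set $N_p^\lambda\times\overline{B_\delta(\bar{x}_{p+1})}\times\cdots\times\overline{B_\delta(\bar{x}_k)}$: every $\omega\in N_p^\lambda$ satisfies $\|\omega\|_{L^1(D)}\le\sum_{i=1}^p|\kappa_i|$ and $\|\omega\|_{L^\infty(D)}\le\lambda$, so $E(\omega)$ is controlled using the local integrability of $G$ (e.g. by $G(x,y)\le\frac{1}{2\pi}\ln\frac{1}{|x-y|}+C$ and the fact that $L^1\cap L^\infty$ functions with fixed mass have uniformly bounded $\int\int G|\omega||\omega|$); the remaining terms are continuous functions of $(x_{p+1},\dots,x_k)$ on the compact set $\overline{B_\delta(\bar{x}_{p+1})}\times\cdots\times\overline{B_\delta(\bar{x}_k)}$, which is disjoint from the diagonal since the balls are disjoint, so $G(x_i,x_j)$ and $H(x_j)$ stay bounded there. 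Hence $\sup\mathcal{E}=:M<+\infty$, and we may pick a maximizing sequence $(\omega^n,x_{p+1}^n,\dots,x_k^n)$.

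Next I would extract convergent subsequences. Since each $\overline{B_\delta(\bar{x}_j)}$ is compact in $\mathbb{R}^2$, after passing to a subsequence $x_j^n\to x_j^\lambda\in\overline{B_\delta(\bar{x}_j)}$ for $j=p+1,\dots,k$. For the vorticity part, $N_p^\lambda$ is a bounded subset of $L^\infty(D)$ (hence of $L^2(D)$), so after a further subsequence $\omega^n\rightharpoonup\omega^\lambda$ weakly in $L^2(D)$; I would then check $\omega^\lambda\in N_p^\lambda$, i.e. that $N_p^\lambda$ is weakly closed. This is routine: the support constraint $\mathrm{supp}(\omega_i^n)\subset B_\delta(\bar{x}_i)$ and the disjointness of these balls let us identify the weak limit of each piece $\omega_i^n$, the sign/pointwise bound $0\le\mathrm{sgn}(\kappa_i)\omega_i\le\lambda$ defines a convex set which is strongly closed in $L^2$ hence weakly closed, and the linear mass constraint $\int_D\omega_i^n=\kappa_i$ passes to the limit since constants lie in $L^2(D)$.

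Finally I would pass to the limit in $\mathcal{E}$. The nontrivial term is $E(\omega)=\frac12\int_D\int_D G(x,y)\omega(x)\omega(y)\,dx\,dy$: I would argue that the map $\omega\mapsto E(\omega)$ is weakly continuous on $N_p^\lambda$. Write $E(\omega^n)=\frac12\int_D\psi^n\omega^n$ with $\psi^n=(-\Delta)^{-1}\omega^n$; by standard elliptic estimates the uniform $L^2$ (indeed $L^\infty$) bound on $\omega^n$ gives a uniform bound on $\psi^n$ in $W^{2,q}(D)$ for all $q<\infty$, hence by Rellich $\psi^n\to\psi^\lambda$ strongly in, say, $L^2(D)$ along a subsequence, while $\omega^n\rightharpoonup\omega^\lambda$ weakly in $L^2(D)$; the product of a weakly convergent and a strongly convergent $L^2$ sequence converges, giving $E(\omega^n)\to E(\omega^\lambda)$. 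Similarly the coupling term $\sum_{j=p+1}^k\kappa_j\psi^n(x_j^n)$ converges because $\psi^n\to\psi^\lambda$ in $C^{0}(\overline D)$ (again from the $W^{2,q}$ bound and Sobolev embedding, which also handles the simultaneous convergence $x_j^n\to x_j^\lambda$), and the purely point-vortex terms converge by continuity of $G$ and $H$ off the diagonal. Therefore $\mathcal{E}(\omega^\lambda,x_{p+1}^\lambda,\dots,x_k^\lambda)=\lim_n\mathcal{E}(\omega^n,x_{p+1}^n,\dots,x_k^n)=M$, so the limit is the desired maximizer. The main obstacle is the $E(\omega)$ term, precisely because the singular kernel $G$ makes continuity (rather than mere upper semicontinuity) a point one must justify via the elliptic-regularity/compact-embedding argument above; everything else is a soft compactness-plus-continuity argument.
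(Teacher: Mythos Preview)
Your proposal is correct and follows essentially the same direct-method strategy as the paper: bound $\mathcal{E}$ from above, take a maximizing sequence, extract limits by compactness, and pass to the limit in $\mathcal{E}$. The only cosmetic differences are that the paper works with weak-star convergence in $L^\infty(D)$ (equivalent to your weak $L^2$ convergence on this uniformly bounded set) and simply cites a reference for the closedness of $N_p^\lambda$, while you spell out the convexity/linearity argument; likewise the paper asserts convergence of $\mathcal{E}$ along the subsequence without detail, whereas you supply the elliptic-regularity/Rellich justification for $E(\omega^n)\to E(\omega^\lambda)$.
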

\begin{proof}
Firstly for any $(\omega, x_{p+1},\cdot\cdot\cdot,x_k)\in N_p^\lambda\times \overline{B_\delta(\bar{x}_{p+1})}\times\cdot\cdot\cdot\times\overline{B_\delta(\bar{x}_k)}$, since $G\in L^1(D\times D)$, $dist(B_{\delta_0}(\bar{x}_i),B_{\delta_0}(\bar{x}_j))>0$ for any $i\neq j, p+1\leq i,j\leq k$, and $dist(B_{\delta_0}(\bar{x}_i),\partial D)>0, p+1\leq i\leq k$, we have
\begin{equation}
\begin{split}
\mathcal{E}(\omega,x_{p+1},\cdot\cdot\cdot,x_k)&=\frac{1}{2}\int_D\int_DG(x,y)\omega(x)\omega(y)dxdy+\sum_{j=p+1}^k\kappa_j\psi(x_j)\\
&+\frac{1}{2}\sum_{\begin{subarray}\,\,\,\quad i\neq j\\ p+1\leq i,j\leq k \end{subarray}}\kappa_i\kappa_j G(x_i,x_j)-\sum_{j=p+1}^k\kappa_j^2H(x_j)\\
&\leq\frac{\lambda^2}{2}\int_D\int_D|G(x,y)|dxdy+\sum_{j=p+1}^k|\kappa_j||\psi|_{L^\infty(D)}+C\\
&\leq C,
\end{split}
\end{equation}
where $\psi=(-\Delta)^{-1}\omega$, $C$ is a positive constant depending on $\lambda, \delta_0, D$ and $(\bar{x}_{p+1},\cdot\cdot\cdot,\bar{x}_k).$  So we have
\[M:=\sup_{(\omega, x_{p+1},\cdot\cdot\cdot,x_k)\in N_p^\lambda\times \overline{B_\delta(\bar{x}_{p+1})}\times\cdot\cdot\cdot\times\overline{B_\delta(\bar{x}_k)}}\mathcal{E}(\omega,x_{p+1},\cdot\cdot\cdot,x_k)<+\infty.\]

Now we can choose $(\omega^n, x^n_{p+1},\cdot\cdot\cdot,x^n_k)\in N_p^\lambda\times \overline{B_\delta(\bar{x}_{p+1})}\times\cdot\cdot\cdot\times\overline{B_\delta(\bar{x}_k)}$ such that
\[\lim_{n\rightarrow+\infty}\mathcal{E}(\omega^n, x^n_{p+1},\cdot\cdot\cdot,x^n_k)=M.\]
Since $N_p^\lambda$ is weakly star closed in $L^\infty(D)$(see \cite{CW3}, Theorem 2.1) and $\overline{B_\delta(\bar{x}_{j})}$ is closed in the Euclidean topology for $j=p+1,\cdot\cdot\cdot,k,$ there exists $(\omega^\lambda, x^\lambda_{p+1},\cdot\cdot\cdot,x^\lambda_k)\in N_p^\lambda\times \overline{B_\delta(\bar{x}_{p+1})}\times\cdot\cdot\cdot\times\overline{B_\delta(\bar{x}_k)}$ such that(up to a subsequence)
\[\omega^n\rightarrow\omega^\lambda ,\quad\text{weakly star in $L^\infty(D)$},\]
\[x^n_j\rightarrow x^\lambda_j,\quad j=p+1,\cdot\cdot\cdot,k.\]
Then obviously
\[\mathcal{E}(\omega^\lambda, x^\lambda_{p+1},\cdot\cdot\cdot,x^\lambda_k)=\lim_{n\rightarrow+\infty}\mathcal{E}(\omega^n, x^n_{p+1},\cdot\cdot\cdot,x^n_k)=M,\]
which completes the proof.
\end{proof}

\begin{remark}
It is easy to see that
\begin{equation}\label{35}
E(\omega)+\sum_{j=p+1}^k\kappa_j\psi(x_j^\lambda)\leq E(\omega^\lambda)+\sum_{j=p+1}^k\kappa_j\psi^\lambda(x_j^\lambda),\quad \forall\; \omega\in N_p^\lambda,
\end{equation}
and
\begin{equation}\label{36}
\begin{split}
&\sum_{j=p+1}^k\kappa_j\psi^\lambda(x_j)
+\frac{1}{2}\sum_{\begin{subarray}\,\,\,\quad i\neq j\\ p+1\leq i,j\leq k \end{subarray}}\kappa_i\kappa_j G(x_i,x_j)-\sum_{j=p+1}^k\kappa_j^2H(x_j)\\
\leq &\sum_{j=p+1}^k\kappa_j\psi^\lambda(x_j^\lambda)
+\frac{1}{2}\sum_{\begin{subarray}\,\,\,\quad i\neq j\\ p+1\leq i,j\leq k \end{subarray}}\kappa_i\kappa_j G(x^\lambda_i,x^\lambda_j)-\sum_{j=p+1}^k\kappa_j^2H(x^\lambda_j),\\
&\forall \;(x_{p+1},\cdot\cdot\cdot,x_k)\in \overline{B_\delta(\bar{x}_{p+1})}\times\cdot\cdot\cdot\times\overline{B_\delta(\bar{x}_k)},
\end{split}
\end{equation}
where $\psi^\lambda=(-\Delta)^{-1}\omega^\lambda$ and $\psi=(-\Delta)^{-1}\omega$.
\end{remark}

Since $\omega^\lambda\in N_p^\lambda$, we can write $\omega^\lambda=\sum_{i=1}^p\omega^\lambda_i$, where $\int_D\omega^\lambda_i=\kappa_i, supp(\omega^\lambda_i)\subset {B_\delta(\bar{x}_i)}$ and $0\leq sgn(\kappa_i)\omega^\lambda_i\leq \lambda.$
The profile of each $\omega^\lambda_i$ is as follows.
\begin{lemma}\label{3-007}
For $i=1,\cdot\cdot\cdot,p$, $\omega^\lambda_i$ has the form
\[\omega^\lambda_i=sgn(\kappa_i)\lambda I_{A^\lambda_i},\]
 where
\[A^\lambda_i:={\{x\in D| sgn(\kappa_i)(\psi^\lambda(x)+\sum_{j=p+1}^k\kappa_jG(x_j^\lambda,x))>c^\lambda_i\}\cap B_\delta(\bar{x}_i)}\]
 for some $c^\lambda_i\in \mathbb{R}$.
\end{lemma}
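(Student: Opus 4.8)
The plan is to exploit the fact that $(\omega^\lambda, x_{p+1}^\lambda,\dots,x_k^\lambda)$ maximizes $\mathcal E$, so that $\omega^\lambda$ in particular maximizes the functional $\omega \mapsto E(\omega) + \sum_{j=p+1}^k \kappa_j \psi(x_j^\lambda)$ over $N_p^\lambda$, as recorded in inequality \eqref{35}. Fix $i \in \{1,\dots,p\}$ and, without loss of generality, suppose $\kappa_i > 0$ (the case $\kappa_i < 0$ is identical after a sign change). First I would observe that each component is independent: since the constraints defining $N_p^\lambda$ act separately on the $\omega_i$ with disjoint supports $B_\delta(\bar x_i)$, and $E(\omega) = \frac12 \sum_{m,n} \int\int G\,\omega_m\omega_n$, the cross terms $\int\int G\,\omega_m\omega_n$ for $m\neq n$ are fixed by the other components; hence $\omega_i^\lambda$ maximizes
\[
\omega_i \longmapsto \tfrac12\int_D\int_D G(x,y)\omega_i(x)\omega_i(y)\,dx\,dy + \int_D\Big(\psi_{\ne i}^\lambda(x) + \sum_{j=p+1}^k\kappa_j G(x_j^\lambda,x)\Big)\omega_i(x)\,dx
\]
over $\{\omega_i \in L^\infty : \mathrm{supp}(\omega_i)\subset B_\delta(\bar x_i),\ \int_D\omega_i=\kappa_i,\ 0\le\omega_i\le\lambda\}$, where $\psi_{\ne i}^\lambda = (-\Delta)^{-1}\sum_{m\ne i}\omega_m^\lambda$. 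Writing $\zeta^\lambda(x) := \psi^\lambda(x) + \sum_{j=p+1}^k\kappa_j G(x_j^\lambda,x)$, the first variation of this functional at $\omega_i^\lambda$ in an admissible direction is $\int_D \zeta^\lambda(x)\,(\text{perturbation})\,dx$.

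The core step is a bathtub-type rearrangement argument. Suppose, for contradiction, that $\omega_i^\lambda$ is \emph{not} of bang-bang form $\lambda I_{A}$ relative to a super-level set of $\zeta^\lambda$. Then there exist $\varepsilon>0$ and two sets $P,Q\subset B_\delta(\bar x_i)$ of positive measure with $P \subset \{0 < \omega_i^\lambda \le \lambda-\varepsilon\}$, $Q \subset \{\varepsilon \le \omega_i^\lambda\}$, $|P|=|Q|$, and $\inf_P \zeta^\lambda > \sup_Q \zeta^\lambda$ (this is the standard statement that a non-bang-bang maximizer fails the stratification by $\zeta^\lambda$; one produces such $P,Q$ by a measure-theoretic selection once one knows the level structure is violated). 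Define the competitor $\tilde\omega_i = \omega_i^\lambda + \varepsilon(I_P - I_Q)$; it still lies in the admissible class, and replacing $\omega_i^\lambda$ by $\tilde\omega_i$ (keeping all other components and all $x_j^\lambda$ fixed) changes $\mathcal E$ by
\[
\varepsilon\int_P \zeta^\lambda\,dx - \varepsilon\int_Q \zeta^\lambda\,dx + \tfrac{\varepsilon^2}{2}\int\int G(x,y)(I_P-I_Q)(x)(I_P-I_Q)(y)\,dx\,dy.
\]
The linear part is strictly positive and of order $\varepsilon$; the quadratic correction is $O(\varepsilon^2)$ since $G\in L^1(D\times D)$ and the densities are bounded, so choosing $\varepsilon$ small enough the total change is strictly positive, contradicting maximality. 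Hence, up to a set of measure zero, $\omega_i^\lambda = \lambda$ on $\{\zeta^\lambda > c_i^\lambda\}\cap B_\delta(\bar x_i)$ and $\omega_i^\lambda = 0$ on $\{\zeta^\lambda < c_i^\lambda\}\cap B_\delta(\bar x_i)$, where $c_i^\lambda := \inf\{t : |\{\zeta^\lambda>t\}\cap B_\delta(\bar x_i)| \le \kappa_i/\lambda\}$ is the level fixed by the mass constraint $\int_D\omega_i^\lambda=\kappa_i$.

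It remains to dispose of the level set $\{\zeta^\lambda = c_i^\lambda\}$: there $\omega_i^\lambda$ is a priori only constrained to lie in $[0,\lambda]$. By elliptic regularity (the Remark after Definition \ref{2-10}), $\psi^\lambda \in C^{1,\alpha}(\overline D)$, and $G(x_j^\lambda,\cdot)$ is smooth away from $x_j^\lambda \notin B_\delta(\bar x_i)$, so $\zeta^\lambda \in C^1$ near $B_\delta(\bar x_i)$; moreover $-\Delta\zeta^\lambda = \omega^\lambda = \lambda$ on a neighbourhood of the set $\{\zeta^\lambda > c_i^\lambda\}\cap B_\delta(\bar x_i)$ (a dense open portion of it), which forces $\nabla\zeta^\lambda \ne 0$ a.e. on $\{\zeta^\lambda=c_i^\lambda\}$ by the strong maximum principle / Hopf-type reasoning, so that this level set is null. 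Absorbing it either way, we obtain $\omega_i^\lambda = \lambda I_{A_i^\lambda}$ with $A_i^\lambda$ as stated, and $\zeta^\lambda = \psi^\lambda + \sum_{j=p+1}^k\kappa_j G(x_j^\lambda,\cdot)$ exactly as in the claim. The main obstacle I anticipate is the rigorous extraction of the sets $P,Q$ witnessing non-bang-bang behaviour and the clean treatment of the critical level $\{\zeta^\lambda=c_i^\lambda\}$; both are handled by now-standard arguments from the vorticity method (cf. \cite{T,T2}), so I would cite these rather than reprove them in full.
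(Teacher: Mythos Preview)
Your overall strategy matches the paper's: both exploit the first-order optimality of $\omega^\lambda$ in $N_p^\lambda$ (inequality \eqref{35}) to force a bang-bang structure stratified by $\zeta^\lambda := \psi^\lambda + \sum_{j=p+1}^k \kappa_j G(x_j^\lambda,\cdot)$. The paper packages this as a one-parameter perturbation $\omega_s^\lambda = \omega^\lambda + s(z_0 - z_1)$ and reads off $\sup_{\{\omega_i^\lambda < \lambda\}\cap B_\delta(\bar x_i)} \zeta^\lambda \le \inf_{\{\omega_i^\lambda > 0\}\cap B_\delta(\bar x_i)} \zeta^\lambda$ directly from $\frac{d}{ds}\big|_{s=0^+}\le 0$; your direct-competitor construction with $\varepsilon(I_P - I_Q)$ is the contrapositive of the same statement, so up to this point the two routes are equivalent.

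The genuine weak point is your treatment of the critical level $\{\zeta^\lambda = c_i^\lambda\}$. Your Hopf-type claim is both stronger than needed and circular as written: to apply a strong maximum principle near the level set you would need to know the sign of $-\Delta\zeta^\lambda = \omega_i^\lambda$ there, which is precisely what remains undetermined; and even if Hopf applied on $\partial\{\zeta^\lambda>c_i^\lambda\}$, it would not rule out a ``fat'' portion of the level set away from that boundary. More to the point, you do not need the level set to be null---only that $\omega_i^\lambda=0$ a.e.\ on it. The paper's argument is cleaner here: since $\zeta^\lambda\in W^{2,q}(B_\delta(\bar x_i))$ for every $q<\infty$, the standard fact that $\nabla f=0$ a.e.\ on any level set of a Sobolev function gives $\nabla\zeta^\lambda=0$ a.e.\ on $\{\zeta^\lambda=c_i^\lambda\}$, and iterating for each component of $\nabla\zeta^\lambda$ yields $D^2\zeta^\lambda=0$ a.e.\ there, hence $\omega_i^\lambda=-\Delta\zeta^\lambda=0$ a.e.\ on the level set. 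This closes the proof without any claim about the measure of $\{\zeta^\lambda=c_i^\lambda\}$, and is the argument you should substitute for the Hopf sketch.
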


\begin{proof}
To make it clear we divide the proof into two cases.

Case 1: $\kappa_i>0$. For  $s>0$  we define a family of test functions $\omega^\lambda_{s}=\omega^\lambda+s(z_0-z_1)$, where
\begin{equation}
\begin{cases}
z_0,z_1\in L^\infty(D), z_0,z_1\geq 0, \int_Dz_0(x)dx=\int_D z_1(x)dx,
 \\ supp(z_0),supp(z_1)\subset B_\delta(\bar{x}_i),
 \\z_0=0\text{\,\,\,\,\,\,} \text{in}\text{\,\,} B_\delta(\bar{x}_i)\setminus\{{\omega}^\lambda_i\leq \lambda-\mu\},
 \\z_1=0\text{\,\,\,\,\,\,} \text{in}\text{\,\,} B_\delta(\bar{x}_i)\setminus\{{\omega}^\lambda_i\geq\mu\},
\end{cases}
\end{equation}
for some $0<\mu<\lambda$. It is not hard to check that for fixed $z_0,z_1$ and $\mu$, if $s$ is sufficiently small(depending on  $z_0,z_1,\mu$), then $\omega^\lambda_s\in N_p^\lambda$. By \eqref{35} we have
\[\frac{d}{ds}[E(\omega^\lambda_s)+\sum_{j=p+1}^k\kappa_j\psi^\lambda_s(x^\lambda_j)]\bigg{|}_{s=0^+}\leq 0,\]
where $\psi^\lambda_s=(-\Delta)^{-1}\omega^\lambda_s$.
That is,
\[\int_D(\psi^\lambda(x)+\sum_{j=p+1}^k\kappa_jG(x^\lambda_j,x))z_0(x)dx\leq\int_D(\psi^\lambda(x)+\sum_{j=p+1}^k\kappa_jG(x^\lambda_j,x))z_1(x)dx.\]
Since $z_0,z_1,\mu$ are chosen arbitrarily as above we have
\[\sup_{\{\omega^\lambda_i<\lambda\}\cap B_\delta(\bar{x}_i)}(\psi^\lambda+\sum_{j=p+1}^k\kappa_jG(x^\lambda_j,\cdot))\leq
\inf_{\{\omega^\lambda_i>0\}\cap B_\delta(\bar{x}_i)}(\psi^\lambda+\sum_{j=p+1}^k\kappa_jG(x^\lambda_j,\cdot)).\]
But $\psi^\lambda+\sum_{j=p+1}^k\kappa_jG(x^\lambda_j,\cdot)$ is a continuous function on $\overline{B_\delta(\bar{x}_i)}$(notice that $x^\lambda_j\notin \overline{B_\delta(\bar{x}_i)}$ for $p+1\leq j\leq k$), we obtain
\[\sup_{\{\omega^\lambda_i<\lambda\}\cap B_\delta(\bar{x}_i)}(\psi^\lambda+\sum_{j=p+1}^k\kappa_jG(x^\lambda_j,\cdot))=
\inf_{\{\omega^\lambda_i>0\}\cap B_\delta(\bar{x}_i)}(\psi^\lambda+\sum_{j=p+1}^k\kappa_jG(x^\lambda_j,\cdot)).\]
Now we define
\begin{equation}\label{38}
\begin{split}
c^\lambda_i&:=\sup_{\{\omega^\lambda_i<\lambda\}\cap B_\delta(\bar{x}_i)}(\psi^\lambda+\sum_{j=p+1}^k\kappa_jG(x^\lambda_j,\cdot))\\
&=
\inf_{\{\omega^\lambda_i>0\}\cap B_\delta(\bar{x}_i)}(\psi^\lambda+\sum_{j=p+1}^k\kappa_jG(x^\lambda_j,\cdot)).
\end{split}
\end{equation}
It is easy to see that
\begin{equation}\label{39}
\omega^\lambda_i\equiv \lambda\quad \text{a.e. on} \;\{x\in D|\psi^\lambda(x)+\sum_{j=p+1}^k\kappa_jG(x^\lambda_j,x)>c^\lambda_i\}\cap B_\delta(\bar{x}_i),
\end{equation}
\begin{equation}\label{310}
\omega^\lambda_i\equiv 0\quad \text{a.e. on} \;\{x\in D|\psi^\lambda(x)+\sum_{j=p+1}^k\kappa_jG(x^\lambda_j,x)<c^\lambda_i\}\cap B_\delta(\bar{x}_i).
\end{equation}
On the set $\{x\in D|\psi^\lambda(x)+\sum_{j=p+1}^k\kappa_jG(x^\lambda_j,x)=c^\lambda_i\}\cap B_\delta(\bar{x}_i)$, we have \begin{equation}
\nabla(\psi^\lambda+\sum_{j=p+1}^k\kappa_jG(x^\lambda_j,\cdot))=0,\quad\text{a.e.},
\end{equation}
which gives
\begin{equation}\label{312}
\omega^\lambda_i=-\Delta(\psi^\lambda+\sum_{j=p+1}^k\kappa_jG(x^\lambda_j,\cdot))=0,\quad\text{a.e.}.
\end{equation}
\eqref{39},\eqref{310} and \eqref{312} together give
\begin{equation}
\omega^\lambda_i=\lambda I_{\{\psi^\lambda+\sum_{j=p+1}^k\kappa_jG(x_j^\lambda,\cdot)>c^\lambda_i\}\cap B_\delta(\bar{x}_i)},
\end{equation}
which completes the proof of Case 1.

Case 2: $\kappa_i<0$. The argument is similar. For $s>0$, define $\omega^\lambda_{s}=\omega^\lambda+s(z_1-z_0)$, where
\begin{equation}
\begin{cases}
z_0,z_1\in L^\infty(D), z_0,z_1\geq 0, \int_Dz_0(x)dx=\int_D z_1(x)dx,
 \\ supp(z_0),supp(z_1)\subset B_\delta(\bar{x}_i),
 \\z_0=0\text{\,\,\,\,\,\,} \text{in}\text{\,\,} B_\delta(\bar{x}_i)\setminus\{{\omega}^\lambda_i\geq\mu-\lambda\},
 \\z_1=0\text{\,\,\,\,\,\,} \text{in}\text{\,\,} B_\delta(\bar{x}_i)\setminus\{{\omega}^\lambda_i\leq-\mu\},
\end{cases}
\end{equation}
for some $0<\mu<\lambda$. Then
\[\frac{d}{ds}[E(\omega^\lambda_s)+\sum_{j=p+1}^k\kappa_j\psi^\lambda_s(x^\lambda_j)]\bigg{|}_{s=0^+}\leq 0.\]
Repeat the argument in Case 1 we obtain
\[\sup_{\{\omega^\lambda_i<0\}\cap B_\delta(\bar{x}_i)}(\psi^\lambda+\sum_{j=p+1}^k\kappa_jG(x^\lambda_j,\cdot))=
\inf_{\{\omega^\lambda_i>-\lambda\}\cap B_\delta(\bar{x}_i)}(\psi^\lambda+\sum_{j=p+1}^k\kappa_jG(x^\lambda_j,\cdot)),\]
then we can define
\begin{equation}\label{314}
\begin{split}
c^\lambda_i&:=-\sup_{\{\omega^\lambda_i<0\}\cap B_\delta(\bar{x}_i)}(\psi^\lambda+\sum_{j=p+1}^k\kappa_jG(x^\lambda_j,\cdot))\\
&=-\inf_{\{\omega^\lambda_i>-\lambda\}\cap B_\delta(\bar{x}_i)}(\psi^\lambda+\sum_{j=p+1}^k\kappa_jG(x^\lambda_j,\cdot)).
\end{split}
\end{equation}
Similarly we have
\[\omega^\lambda_i=-\lambda I_{\{\psi^\lambda+\sum_{j=p+1}^k\kappa_jG(x_j^\lambda,\cdot)<-c^\lambda_i\}\cap B_\delta(\bar{x}_i)}.\]

To sum up, $\omega^\lambda_i$ has the form
\[\omega^\lambda_i=sgn(\kappa_i)\lambda I_{\{sgn(\kappa_i)(\psi^\lambda+\sum_{j=p+1}^k\kappa_jG(x_j^\lambda,\cdot))>c^\lambda_i\}\cap B_\delta(\bar{x}_i)},\]
which is the desired result.
\end{proof}

For the $c^\lambda_i$ given in Lemma \ref{3-007}, we have

\begin{lemma}\label{3-08}
$c^\lambda_i>-\frac{|\kappa_i|}{2\pi}\ln\delta-C$, where $C>0$ is independent of $\lambda$ and $\delta$.
\end{lemma}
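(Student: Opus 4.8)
The plan is to read off $c^\lambda_i$ from the variational identities established inside the proof of Lemma~\ref{3-007}, and then to control the resulting quantity by an elementary logarithmic estimate on the Green's function. First I would note that \eqref{38} (if $\kappa_i>0$) and \eqref{314} (if $\kappa_i<0$) can be rewritten uniformly as
\[
c^\lambda_i=\inf_{A^\lambda_i}\ sgn(\kappa_i)\Big(\psi^\lambda+\sum_{j=p+1}^k\kappa_jG(x^\lambda_j,\cdot)\Big),
\]
since $\omega^\lambda_i=sgn(\kappa_i)\lambda I_{A^\lambda_i}$ forces $\{\omega^\lambda_i>0\}$ (resp.\ $\{\omega^\lambda_i<0\}$) to coincide, up to a null set, with $A^\lambda_i\subset B_\delta(\bar{x}_i)$. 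Hence it suffices to bound the continuous function $g_i:=sgn(\kappa_i)\big(\psi^\lambda+\sum_{j=p+1}^k\kappa_jG(x^\lambda_j,\cdot)\big)$ from below, pointwise on all of $B_\delta(\bar{x}_i)$, by $-\frac{|\kappa_i|}{2\pi}\ln\delta-C$ with $C$ independent of $\lambda$ and $\delta$.

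Next I would split $\psi^\lambda=\sum_{m=1}^p\psi^\lambda_m$ with $\psi^\lambda_m:=(-\Delta)^{-1}\omega^\lambda_m$, separating $g_i$ into its ``self'' part $sgn(\kappa_i)\psi^\lambda_i$ and the ``far'' parts $sgn(\kappa_i)\sum_{m\ne i}\psi^\lambda_m$ and $sgn(\kappa_i)\sum_{j=p+1}^k\kappa_jG(x^\lambda_j,\cdot)$. Because $\delta<\delta_0/2$ and the closed balls $\overline{B_{\delta_0}(\bar{x}_1)},\dots,\overline{B_{\delta_0}(\bar{x}_k)}$ are pairwise disjoint, one has $\mathrm{dist}(B_\delta(\bar{x}_i),B_\delta(\bar{x}_m))>\delta_0$ for $m\ne i$ and $\mathrm{dist}(B_\delta(\bar{x}_i),\overline{B_\delta(\bar{x}_j)})>\delta_0$ for $p+1\le j\le k$; on the corresponding product sets the kernel $G(x,y)=\frac{1}{2\pi}\ln\frac{1}{|x-y|}-h(x,y)$ stays bounded by a constant depending only on $\delta_0$ and $D$ (using that $h$ is bounded on compact subsets of $D\times D$). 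Consequently $\|\psi^\lambda_m\|_{L^\infty(B_\delta(\bar{x}_i))}\le C|\kappa_m|$ for $m\ne i$, and $\big|\sum_{j=p+1}^k\kappa_jG(x^\lambda_j,x)\big|\le C$ for $x\in B_\delta(\bar{x}_i)$, both with $C$ independent of $\lambda$ and $\delta$, so the far parts only contribute a harmless bounded error.

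For the self part I would use that $A^\lambda_i\subset B_\delta(\bar{x}_i)$ together with $\int_D\omega^\lambda_i=\kappa_i$, which yield the mass identity $\lambda|A^\lambda_i|=|\kappa_i|$; then for $x\in B_\delta(\bar{x}_i)$,
\[
sgn(\kappa_i)\psi^\lambda_i(x)=\lambda\int_{A^\lambda_i}G(x,y)\,dy=\lambda\int_{A^\lambda_i}\Big(\frac{1}{2\pi}\ln\frac{1}{|x-y|}-h(x,y)\Big)dy .
\]
Since $x,y\in B_\delta(\bar{x}_i)$ implies $|x-y|\le 2\delta$, we have $\ln\frac{1}{|x-y|}\ge-\ln 2-\ln\delta$, while $|h(x,y)|\le C_h:=\sup_{\overline{B_{\delta_0}(\bar{x}_i)}\times\overline{B_{\delta_0}(\bar{x}_i)}}|h|<\infty$; integrating and invoking $\lambda|A^\lambda_i|=|\kappa_i|$ then gives $sgn(\kappa_i)\psi^\lambda_i(x)\ge-\frac{|\kappa_i|}{2\pi}\ln\delta-C$ on $B_\delta(\bar{x}_i)$. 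Combining this with the bounded far parts gives $g_i\ge-\frac{|\kappa_i|}{2\pi}\ln\delta-C$ on $B_\delta(\bar{x}_i)$, hence $c^\lambda_i\ge-\frac{|\kappa_i|}{2\pi}\ln\delta-C$. The individual estimates are routine; the one point that needs care is bookkeeping the constants: $\lambda$-independence comes solely from the cancellation $\lambda|A^\lambda_i|=|\kappa_i|$ in front of the (negative) logarithmic integral, and $\delta$-independence of the far-field bounds from the fixed gap $\delta_0$ together with $\delta<\delta_0/2$. I do not expect a genuine obstacle beyond this.
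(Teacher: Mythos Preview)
Your proposal is correct and follows essentially the same line as the paper: bound $c^\lambda_i$ from below by the infimum of $sgn(\kappa_i)\big(\psi^\lambda+\sum_{j=p+1}^k\kappa_jG(x^\lambda_j,\cdot)\big)$ over $B_\delta(\bar x_i)$, control the ``far'' contributions uniformly via the fixed separation $\delta_0$, and handle the ``self'' part through $|x-y|\le 2\delta$ together with $\lambda|A^\lambda_i|=|\kappa_i|$. The only cosmetic difference is that the paper inserts a maximum-principle step (superharmonicity of $\psi^\lambda+\sum_{j}\kappa_jG(x^\lambda_j,\cdot)$ on $B_\delta(\bar x_i)$) to reduce to $\partial B_\delta(\bar x_i)$ before applying the logarithmic bound, whereas you apply that bound directly on the whole ball; your route is marginally more direct but not substantively different.
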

\begin{proof}
We only prove the case $\kappa_i>0$. The case $\kappa_i<0$ can be proved similarly.

By the definition of $c^\lambda_i$(see \eqref{38}),
\begin{equation}\label{3-91}
\begin{split}
c^\lambda_i&=\inf_{\{\omega^\lambda_i>0\}\cap B_\delta(\bar{x}_i)}(\psi^\lambda+\sum_{j=p+1}^k\kappa_jG(x^\lambda_j,\cdot))\\
&\geq \inf_{B_\delta(\bar{x}_i)}(\psi^\lambda+\sum_{j=p+1}^k\kappa_jG(x^\lambda_j,\cdot))\quad\text{(by the maximum principle)}\\
&\geq \inf_{\partial B_\delta(\bar{x}_i)}(\psi^\lambda+\sum_{j=p+1}^k\kappa_jG(x^\lambda_j,\cdot))\\
&\geq \inf_{\partial B_\delta(\bar{x}_i)}\psi^\lambda-C
\end{split}
\end{equation}
for some $C>0$ not depending on $\lambda$ and $\delta$. But for any $x\in \partial B_\delta(\bar{x}_i)$,
\begin{equation}\label{3-92}
\begin{split}
\psi^\lambda(x)&=\int_DG(x,y)\omega^\lambda(y)dy\\
&=-\frac{1}{2\pi}\int_D\ln|x-y|\omega^\lambda_i(y)dy-\int_Dh(x,y)\omega^\lambda_i(y)dy\\
&+\int_DG(x,y)\sum_{j=1,j\neq i}^p\omega^\lambda_j(y)dy\\
&\geq -\frac{\kappa_i}{2\pi}\ln|2\delta|-C,
\end{split}
\end{equation}
where $C$ is a positive number not depending on $\lambda,\delta$. Here we use the fact that $dist(B_{\delta_0}(\bar{x}_i),B_{\delta_0}(\bar{x}_j))>0$ for any $i\neq j, 1\leq i,j\leq p$, and $dist(B_{\delta_0}(\bar{x}_i),\partial D)>0, 1\leq i\leq p$.

Combining \eqref{3-91} with \eqref{3-92} we get the desired result.
\end{proof}

\begin{lemma}\label{3-09}
For $\delta$ sufficiently small(not depending on $\lambda$) the following assertion holds true
 \begin{equation}\label{3-41}
 sgn(\kappa_i)(\psi^\lambda+\sum_{j=p+1}^k\kappa_jG(x^\lambda_j,\cdot))-c^\lambda_i\leq 0\quad\text{on $\partial B_{\delta_0}(\bar{x}_i)$}
 \end{equation}
  for each $1\leq i\leq p.$
\end{lemma}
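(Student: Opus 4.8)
The plan is to pit the lower bound for $c^\lambda_i$ from Lemma~\ref{3-08}, which grows like $-\frac{|\kappa_i|}{2\pi}\ln\delta$ and hence tends to $+\infty$ as $\delta\to 0$, against an upper bound for $sgn(\kappa_i)(\psi^\lambda+\sum_{j=p+1}^k\kappa_jG(x^\lambda_j,\cdot))$ on $\partial B_{\delta_0}(\bar x_i)$ that is uniform in \emph{both} $\lambda$ and $\delta$. Once such a uniform bound $C_0$ is available, I would simply choose $\delta$ small enough that $-\frac{|\kappa_i|}{2\pi}\ln\delta - C\ge C_0$ for every $i=1,\dots,p$ (finitely many inequalities, so one choice of $\delta$ works for all $i$), after which \eqref{3-41} is immediate from Lemma~\ref{3-08}. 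The point to keep in mind throughout is that the bound on $\partial B_{\delta_0}(\bar x_i)$ must \emph{not} deteriorate as $\delta$ shrinks, so that decreasing $\delta$ only enlarges $c^\lambda_i$.

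To produce that uniform bound, fix $x\in\partial B_{\delta_0}(\bar x_i)$ and estimate each ingredient. Since $\delta<\delta_0/2$, the support of $\omega^\lambda_i$ lies in $B_\delta(\bar x_i)$, which is at distance at least $\delta_0-\delta>\delta_0/2$ from $x$; for $m\neq i$ the set $B_\delta(\bar x_m)$ is at a distance from $\partial B_{\delta_0}(\bar x_i)$ bounded below by a constant depending only on $\delta_0$ and the configuration $(\bar x_1,\dots,\bar x_k)$, because the closures $\overline{B_{\delta_0}(\bar x_m)}$ are pairwise disjoint; and all these sets, together with $\partial B_{\delta_0}(\bar x_i)$, are compactly contained in $D$. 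On such regions both $\frac1{2\pi}\ln\frac1{|x-y|}$ and the regular part $h(x,y)$ are bounded by constants independent of $\lambda$ and $\delta$, so $|\psi^\lambda(x)|=\big|\int_D G(x,y)\omega^\lambda(y)\,dy\big|\le C\sum_{m=1}^p|\kappa_m|$, using that $|\omega^\lambda_m|$ has integral $|\kappa_m|$. Likewise, for $j=p+1,\dots,k$ the point $x^\lambda_j\in\overline{B_\delta(\bar x_j)}$ stays a fixed positive distance from $\partial B_{\delta_0}(\bar x_i)$, so $|G(x^\lambda_j,x)|\le C$. Adding these up yields $sgn(\kappa_i)(\psi^\lambda(x)+\sum_{j=p+1}^k\kappa_jG(x^\lambda_j,x))\le C_0$ on $\partial B_{\delta_0}(\bar x_i)$, with $C_0$ depending only on $D$, $\delta_0$, $(\bar x_1,\dots,\bar x_k)$ and the $\kappa_i$'s, but not on $\lambda$ or $\delta$.

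Combining this with Lemma~\ref{3-08}, for $\delta$ small (independently of $\lambda$) we get $c^\lambda_i\ge C_0$, hence $sgn(\kappa_i)(\psi^\lambda+\sum_{j=p+1}^k\kappa_jG(x^\lambda_j,\cdot))-c^\lambda_i\le C_0-c^\lambda_i\le 0$ on $\partial B_{\delta_0}(\bar x_i)$, which is \eqref{3-41}. There is no genuinely hard step here; the only thing that needs care is the uniformity claim in the previous paragraph — in particular that the regular part $h$ is bounded on $\overline{B_{\delta_0}(\bar x_i)}\times\overline{B_{\delta_0}(\bar x_m)}$ uniformly, which holds because these closed balls are compactly contained in $D$ — together with the bookkeeping needed to ensure that the final choice of $\delta$ is made once and for all, independent of $\lambda$ and valid simultaneously for all $i=1,\dots,p$.
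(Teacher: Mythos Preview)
Your proposal is correct and follows essentially the same approach as the paper: both arguments use Lemma~\ref{3-08} to get $c^\lambda_i\ge -\frac{|\kappa_i|}{2\pi}\ln\delta - C$, establish a bound on $\partial B_{\delta_0}(\bar x_i)$ for $\psi^\lambda+\sum_{j=p+1}^k\kappa_jG(x^\lambda_j,\cdot)$ that is uniform in $\lambda$ and $\delta$ by exploiting $\delta<\delta_0/2$ and the positive pairwise separation of the $\overline{B_{\delta_0}(\bar x_m)}$, and then choose $\delta$ small. Your write-up is in fact slightly more careful about emphasizing the independence from $\delta$ and the simultaneous choice for all $i$.
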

\begin{proof}
Let $1\leq i\leq p$ be fixed. By Lemma \ref{3-08} it suffices to show that
\[\sup_{x\in \partial B_{\delta_0}(\bar{x}_i)}|\psi^\lambda(x)+\sum_{j=p+1}^k\kappa_jG(x^\lambda_j,x)|\leq C\]
for some $C>0$ not depending on $\lambda$ and $\delta.$  In fact, since for any $j\neq i, j=p+1,\cdot\cdot\cdot,k$, $dist(B_{\delta_0}(\bar{x}_i),B_{\delta_0}(\bar{x}_j))>0$, we have
\[\sup_{x\in \partial B_{\delta_0}}|\sum_{j=p+1}^k\kappa_jG(x^\lambda_j,x)|\leq C.\]
It remains to show that
\[\sup_{x\in \partial B_{\delta_0}(\bar{x}_i)}|\psi^\lambda(x)|\leq C.\]
In fact, for any $x\in \partial B_{\delta_0}(\bar{x}_i)$, by simple calculation
\begin{equation}
\begin{split}
|\psi^\lambda(x)|&\leq |\int_D-\frac{1}{2\pi}\ln|x-y|\omega^\lambda_i(y)dy|+|\int_Dh(x,y)\omega^\lambda_i(y)dy|\\
&+|\int_DG(x,y)\sum_{j\neq i,j=1}^p\omega^\lambda_j(y)dy|\\
&\leq-\frac{|\kappa_i|}{2\pi}\ln(\delta_0-\delta)+C \quad \text{(recall that $\delta<\frac{\delta_0}{2}$)}\\
&\leq-\frac{|\kappa_i|}{2\pi}\ln(\frac{\delta_0}{2})+C,
\end{split}
\end{equation}
which completes the proof.

\end{proof}

From now on we choose $\delta$ to be fixed such that $\eqref{3-41}$ holds true.

Now we turn to analyze the asymptotic behavior of the maximizer $(\omega^\lambda,x^\lambda_{p+1},\cdot\cdot\cdot,x^\lambda_k)$ as $\lambda\rightarrow+\infty$. More specifically, we will show that the support of $\omega^\lambda_i$ shrinks to $\bar{x}_i$ for $i=1,\cdot\cdot\cdot,p$ and $x^\lambda_j\rightarrow\bar{x}_j$ for $j=p+1,\cdot\cdot\cdot,k.$ To achieve this, we estimate the energy of the background vorticity first.

In the sequel we will use $C$ to denote various positive numbers not depending on $\lambda$.

\begin{lemma}\label{3-71}
$E(\omega^\lambda)\geq-\frac{1}{4\pi}\sum_{i=1}^p\kappa_i^2\ln\varepsilon_i-C$, where $\varepsilon_i$ is a positive number such that $\lambda\pi\varepsilon^2_i=|\kappa_i|.$
\end{lemma}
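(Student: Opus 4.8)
The plan is to bound $E(\omega^\lambda)$ from below by comparing with an explicit trial function in $N_p^\lambda$, using the maximizing inequality \eqref{35}. The natural choice of trial function is $\tilde\omega = \sum_{i=1}^p \tilde\omega_i$ where $\tilde\omega_i = sgn(\kappa_i)\lambda I_{B_{\varepsilon_i}(\bar x_i)}$, i.e.\ a constant-multiple patch on a small disk centered exactly at $\bar x_i$; here $\varepsilon_i$ is chosen so that $\lambda\pi\varepsilon_i^2 = |\kappa_i|$, which guarantees $\int_D\tilde\omega_i = \kappa_i$ and hence $\tilde\omega\in N_p^\lambda$ (note $\varepsilon_i\to 0$ as $\lambda\to\infty$, so for large $\lambda$ these disks lie inside $B_\delta(\bar x_i)$ and are pairwise disjoint). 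Applying \eqref{35} with $\omega=\tilde\omega$ gives
\[
E(\omega^\lambda)+\sum_{j=p+1}^k\kappa_j\psi^\lambda(x_j^\lambda)\ \geq\ E(\tilde\omega)+\sum_{j=p+1}^k\kappa_j\tilde\psi(x_j^\lambda),
\]
where $\tilde\psi=(-\Delta)^{-1}\tilde\omega$. Since the $x_j^\lambda$ stay a fixed positive distance from all the disks $B_\delta(\bar x_i)$ (as in Lemmas \ref{3-08}, \ref{3-09}), both the term $\sum_j\kappa_j\psi^\lambda(x_j^\lambda)$ and the term $\sum_j\kappa_j\tilde\psi(x_j^\lambda)$ are bounded by a constant $C$ independent of $\lambda$ — indeed $|\psi^\lambda|$ and $|\tilde\psi|$ are uniformly bounded away from the supports by the same Green's-function estimates used above. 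So it remains to estimate $E(\tilde\omega)$ from below.

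The second step is the explicit computation of $E(\tilde\omega)$. Expanding,
\[
E(\tilde\omega)=\frac12\sum_{i=1}^p\int_D\int_D G(x,y)\tilde\omega_i(x)\tilde\omega_i(y)\,dx\,dy
+\frac12\sum_{i\neq j}\int_D\int_D G(x,y)\tilde\omega_i(x)\tilde\omega_j(y)\,dx\,dy.
\]
For $i\neq j$ the disks are disjoint and bounded away from each other, so $G$ is bounded on the product of their supports and the cross terms contribute $O(1)$ (using $\int|\tilde\omega_i|=|\kappa_i|$). For the diagonal terms, write $G(x,y)=\frac{1}{2\pi}\ln\frac1{|x-y|}-h(x,y)$; the $h$-part is bounded since $h$ is smooth on $D\times D$, contributing $O(1)$. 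The singular part gives
\[
\frac12\cdot\frac{1}{2\pi}\lambda^2\int_{B_{\varepsilon_i}(\bar x_i)}\int_{B_{\varepsilon_i}(\bar x_i)}\ln\frac1{|x-y|}\,dx\,dy
=\frac{\kappa_i^2}{2}\Big(-\frac{1}{2\pi}\ln\varepsilon_i + c_0\Big)
\]
for an absolute constant $c_0$, by the standard scaling $\ln|x-y|=\ln\varepsilon_i+\ln|x'-y'|$ after substituting $x=\bar x_i+\varepsilon_i x'$, $y=\bar x_i+\varepsilon_i y'$ and using $\lambda\pi\varepsilon_i^2=|\kappa_i|$, together with the fact that $\int_{B_1}\int_{B_1}\ln\frac1{|x'-y'|}\,dx'\,dy'$ is a finite constant. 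Summing over $i$ yields $E(\tilde\omega)\geq -\frac{1}{4\pi}\sum_{i=1}^p\kappa_i^2\ln\varepsilon_i - C$. Combining with the displayed inequality from Step 1 and absorbing all bounded terms into $C$ gives the claim.

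I do not expect any serious obstacle here: the argument is a routine energy-comparison with the canonical desingularized trial function, and every error term is controlled by the a priori separation estimates already established (distances between the $B_{\delta_0}(\bar x_i)$ and between them and $\partial D$). The only point requiring a little care is making sure the trial function $\tilde\omega$ genuinely lies in $N_p^\lambda$ for all large $\lambda$, which needs $\varepsilon_i<\delta$ — true since $\varepsilon_i=\sqrt{|\kappa_i|/(\lambda\pi)}\to 0$ — and the uniform-in-$\lambda$ boundedness of the point-vortex interaction terms $\sum_j\kappa_j\psi^\lambda(x_j^\lambda)$ and $\sum_j\kappa_j\tilde\psi(x_j^\lambda)$, which follows because $x_j^\lambda\in\overline{B_\delta(\bar x_j)}$ is separated from every $supp(\omega^\lambda_i)$ and every $B_{\varepsilon_i}(\bar x_i)$, so the relevant Green's function values are bounded.
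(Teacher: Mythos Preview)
Your proposal is correct and follows essentially the same approach as the paper: the same trial function $\tilde\omega_i=sgn(\kappa_i)\lambda I_{B_{\varepsilon_i}(\bar x_i)}$, the same use of \eqref{35} to compare energies, the same control of the point-vortex interaction terms via separation of the balls, and the same decomposition of $E(\tilde\omega)$ into bounded cross terms, a bounded $h$-part, and the logarithmic self-energy. The only cosmetic difference is that the paper bounds the logarithmic integral directly by $|x-y|\leq 2\varepsilon_i$ on $B_{\varepsilon_i}(\bar x_i)\times B_{\varepsilon_i}(\bar x_i)$, whereas you scale and appeal to the finiteness of $\int_{B_1}\int_{B_1}\ln\frac{1}{|x'-y'|}\,dx'\,dy'$; both give the same leading term $-\frac{\kappa_i^2}{4\pi}\ln\varepsilon_i$ up to an $O(1)$ constant.
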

\begin{proof}
Choose test function $\omega=\sum_{i=1}^p\omega_i$, where $\omega_i=sgn(\kappa_i)\lambda I_{B_{\varepsilon_i}(\bar{x}_i)}.$ It is obvious that $\omega\in N_p^\lambda$, so by \eqref{35}
\[E(\omega)+\sum_{j=p+1}^k\kappa_j\psi(x_j^\lambda)\leq E(\omega^\lambda)+\sum_{j=p+1}^k\kappa_j\psi^\lambda(x_j^\lambda),\]
where $\psi=(-\Delta)^{-1}\omega$. It is easy to check that
\[|\sum_{j=p+1}^k\kappa_j\psi(x_j^\lambda)|\leq C,\quad |\sum_{j=p+1}^k\kappa_j\psi^\lambda(x_j^\lambda)|\leq C,\]
so
\begin{equation}\label{3-20}
E(\omega^\lambda)\geq E(\omega)-C.
\end{equation}
On the other hand,
\begin{equation}\label{3-21}
\begin{split}
E(\omega)&=\frac{1}{2}\int_D\int_DG(x,y)\omega(x)\omega(y)dxdy\\
&=\sum_{i=1}^pE(\omega_i)+\sum_{1\leq i<j\leq p}\int_D\int_DG(x,y)\omega_i(x)\omega_j(y)dxdy\\
&\geq \sum_{i=1}^pE(\omega_i)-C.
\end{split}
\end{equation}
Each $E(\omega_i)$ can be calculated directly:
\begin{equation}\label{3-22}
\begin{split}
E(\omega_i)&=\frac{1}{2}\int_D\int_DG(x,y)\omega_i(x)\omega_i(y)dxdy\\
&=\frac{1}{2}\int_{B_{\varepsilon_i}(\bar{x}_i)}\int_{B_{\varepsilon_i}(\bar{x}_i)}-\frac{1}{2\pi}\ln|x-y|\omega_i(x)\omega_i(y)dxdy\\
&-\frac{1}{2}\int_{B_{\varepsilon_i}(\bar{x}_i)}\int_{B_{\varepsilon_i}(\bar{x}_i)}h(x,y)\omega_i(x)\omega_i(y)dxdy\\
&\geq-\frac{\kappa_i^2}{4\pi}\ln(2\varepsilon_i)-\sum_{i=1}^p\kappa_i^2H(\bar{x}_i)+o(1),
\end{split}
\end{equation}
where $o(1)\rightarrow0 $ as $\lambda\rightarrow+\infty$. Now \eqref{3-20},\eqref{3-21} and \eqref{3-22} together give the desired result.
\end{proof}

Now we define
\begin{equation}
T^\lambda:=\sum_{i=1}^p\frac{sgn(\kappa_i)}{2}\int_D\omega^\lambda_i(x)[sgn(\kappa_i)(\psi^\lambda(x)+\sum_{j=p+1}^k\kappa_j G(x_j^\lambda,x))-c^\lambda_i]dx,
\end{equation}
which represents the total kinetic energy of the fluid on the vorticity set $\cup_{i=1}^pA^\lambda_i$.

\begin{lemma}\label{3-72}
$T^\lambda\leq C.$
\end{lemma}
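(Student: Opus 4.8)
The plan is to bound $T^\lambda$ from above by comparing it to the total energy $\mathcal{E}(\omega^\lambda, x^\lambda_{p+1},\dots,x^\lambda_k)$, which in turn is controlled by plugging a good test configuration into the variational principle. First I would rewrite $T^\lambda$ by expanding the bracket: since $\int_D \omega^\lambda_i = \kappa_i$ and $sgn(\kappa_i)^2 = 1$, the term $\sum_i \frac{sgn(\kappa_i)}{2}\int_D \omega^\lambda_i \cdot (-c^\lambda_i)\,dx = -\frac{1}{2}\sum_i |\kappa_i| c^\lambda_i$, so that
\begin{equation}
T^\lambda = \frac12\int_D \omega^\lambda(x)\Big(\psi^\lambda(x) + \sum_{j=p+1}^k \kappa_j G(x_j^\lambda,x)\Big)dx - \frac12\sum_{i=1}^p |\kappa_i| c^\lambda_i,
\end{equation}
where I used $\sum_i \omega^\lambda_i = \omega^\lambda$ and, in the cross terms, that $supp(\omega^\lambda_i)\subset B_\delta(\bar x_i)$ so $sgn(\kappa_i)\omega^\lambda_i$ multiplied against the full sum still reconstitutes $\omega^\lambda$ times the same kernel (the mismatch between $sgn(\kappa_i)(\psi^\lambda + \cdots)$ restricted to patch $i$ versus $\psi^\lambda + \cdots$ globally is harmless because each $\omega^\lambda_i$ is supported where its own sign convention applies). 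The first integral is $\int_D \omega^\lambda \psi^\lambda = 2E(\omega^\lambda)$ plus $\sum_j \kappa_j \psi^\lambda(x_j^\lambda)$ by symmetry of $G$, hence
\begin{equation}
T^\lambda = E(\omega^\lambda) + \frac12\sum_{j=p+1}^k \kappa_j\psi^\lambda(x_j^\lambda) - \frac12\sum_{i=1}^p |\kappa_i| c^\lambda_i .
\end{equation}

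Next I would control each piece. The lower bound on $c^\lambda_i$ from Lemma \ref{3-08}, namely $c^\lambda_i > -\frac{|\kappa_i|}{2\pi}\ln\delta - C$, combined with the fact that $\delta$ is now fixed, gives $-\frac12\sum_i |\kappa_i| c^\lambda_i \le C$. The mutual term $\frac12\sum_j \kappa_j \psi^\lambda(x_j^\lambda)$ is bounded in absolute value by $C$ since $x_j^\lambda$ stays in $\overline{B_\delta(\bar x_j)}$, which is at positive distance from $\cup_{i=1}^p B_\delta(\bar x_i)=supp(\omega^\lambda)$, so $|\psi^\lambda(x_j^\lambda)| \le C$. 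Thus it remains to show $E(\omega^\lambda) \le C + $ (something absorbable). For that I would use the maximality of $\mathcal{E}$ at $(\omega^\lambda,x^\lambda_{p+1},\dots,x^\lambda_k)$ against a fixed test configuration — e.g. $\omega = \sum_i sgn(\kappa_i)\lambda I_{B_{\varepsilon_i}(\bar x_i)}$ with $x_j = \bar x_j$ — to get $\mathcal{E}(\omega^\lambda,\dots) \ge \mathcal{E}(\omega,\bar x_{p+1},\dots,\bar x_k)$; but this gives a lower bound, not an upper bound, on $\mathcal{E}$. The correct route is the \emph{upper} bound on $\mathcal{E}$ already established in the first lemma's proof, $\mathcal{E}(\omega^\lambda,\dots) \le \frac{\lambda^2}{2}\int\int|G| + C$ — however that is $\lambda$-dependent and useless. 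So instead I would combine: $E(\omega^\lambda) = \mathcal{E}(\omega^\lambda,\dots) - \sum_j\kappa_j\psi^\lambda(x_j^\lambda) - \frac12\sum_{i\ne j}\kappa_i\kappa_j G(x_i^\lambda,x_j^\lambda) + \sum_j \kappa_j^2 H(x_j^\lambda)$, where all terms except $\mathcal{E}$ and $E(\omega^\lambda)$ itself are bounded by $C$; hence I need an upper bound on $\mathcal{E}(\omega^\lambda,\dots)$ that is $\lambda$-independent, and crucially $T^\lambda = \mathcal{E}(\omega^\lambda,\dots) - \frac12\sum_{i\ne j}\kappa_i\kappa_j G - (\text{bounded}) - \frac12\sum_i|\kappa_i|c^\lambda_i$ — meaning $T^\lambda$ and $\mathcal{E}$ differ by bounded quantities, so bounding one bounds the other.

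The genuine mechanism, then, is: use the variational inequality \eqref{35} with the test function $\omega$ of Lemma \ref{3-71} to get $E(\omega^\lambda) \ge E(\omega) - C \ge -\frac{1}{4\pi}\sum_i \kappa_i^2\ln\varepsilon_i - C$ (this is Lemma \ref{3-71}), and separately obtain the matching \emph{upper} bound $E(\omega^\lambda) \le -\frac{1}{4\pi}\sum_i\kappa_i^2\ln\varepsilon_i + C$ together with $c^\lambda_i \ge -\frac{1}{2\pi}|\kappa_i|\ln\varepsilon_i - C$; subtracting, the $\ln\varepsilon_i$ terms cancel in $T^\lambda = E(\omega^\lambda) + O(1) - \frac12\sum_i|\kappa_i|c^\lambda_i$. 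The upper bound on $E(\omega^\lambda)$ I would derive from the variational characterization of $\omega^\lambda_i$ as a superlevel set: since $0 \le sgn(\kappa_i)\omega^\lambda_i \le \lambda$ and $|A^\lambda_i| = |\kappa_i|/\lambda = \pi\varepsilon_i^2$, the patch cannot be more spread out than an optimally-concentrated one, and a rearrangement/Riesz-type bound on $-\frac{1}{2\pi}\int\int \ln|x-y|\,\omega^\lambda_i\omega^\lambda_i$ gives $E(\omega^\lambda_i)\le -\frac{\kappa_i^2}{4\pi}\ln\varepsilon_i + C$ (the ball maximizes this among sets of fixed measure). The sharper lower bound on $c^\lambda_i$ requires using the Euler--Lagrange structure: from $\omega^\lambda_i = sgn(\kappa_i)\lambda$ on $A^\lambda_i$ and integrating $sgn(\kappa_i)(\psi^\lambda + \sum_j\kappa_j G) - c^\lambda_i > 0$ against $\omega^\lambda_i$, comparing the contribution of the logarithmic kernel on a set of measure $\pi\varepsilon_i^2$. \textbf{The main obstacle} I anticipate is precisely obtaining the sharp two-sided estimates with the $-\frac{1}{4\pi}\kappa_i^2\ln\varepsilon_i$ and $-\frac{1}{2\pi}|\kappa_i|\ln\varepsilon_i$ leading terms so that their difference in $T^\lambda$ is $O(1)$; the upper bound on $E(\omega^\lambda_i)$ via rearrangement and the lower bound on $c^\lambda_i$ must be calibrated to exactly matching constants, and handling the cross terms (between different patches, and between patches and the point-vortex Green's functions) requires that $\delta$ has been fixed so all these are uniformly bounded — which is guaranteed by Lemma \ref{3-09} and the disjointness of the $B_{\delta_0}(\bar x_i)$.
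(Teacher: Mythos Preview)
Your energy-comparison strategy has a genuine gap: the circularity between the sharp lower bound on the Lagrange multipliers $c^\lambda_i$ and the bound $T^\lambda\le C$ cannot be broken by the ingredients you list. You correctly derive the identity
\[
T^\lambda=E(\omega^\lambda)+\tfrac12\sum_{j=p+1}^k\kappa_j\psi^\lambda(x_j^\lambda)-\tfrac12\sum_{i=1}^p|\kappa_i|c^\lambda_i,
\]
and the Riesz upper bound $E(\omega^\lambda)\le -\frac{1}{4\pi}\sum_i\kappa_i^2\ln\varepsilon_i+C$ is fine. But to conclude $T^\lambda\le C$ you then need $\sum_i|\kappa_i|c^\lambda_i\ge -\frac{1}{2\pi}\sum_i\kappa_i^2\ln\varepsilon_i-C$, i.e.\ that each $c^\lambda_i$ is of order $-\frac{|\kappa_i|}{2\pi}\ln\varepsilon_i\sim \ln\lambda\to+\infty$. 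Lemma~\ref{3-08} gives only $c^\lambda_i\ge -C$ once $\delta$ is fixed, which is far too weak. The sharp estimate you require is exactly Lemma~\ref{3-73}, and in the paper's logical order Lemma~\ref{3-73} is \emph{deduced from} Lemma~\ref{3-72} via the very identity above, together with Lemma~\ref{3-71}. Your proposed independent derivation of the sharp $c^\lambda_i$ bound (``using the Euler--Lagrange structure \dots\ comparing the contribution of the logarithmic kernel on a set of measure $\pi\varepsilon_i^2$'') would need pointwise control of $\psi^\lambda$ on $A^\lambda_i$, which in turn requires knowing $\mathrm{diam}(A^\lambda_i)\le C\varepsilon_i$ --- but the diameter bound is established only after Lemmas~\ref{3-72} and \ref{3-73}. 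So the argument loops.

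The paper's proof avoids energy comparison entirely and proceeds by a direct PDE estimate on each piece $\zeta^\lambda_i:=sgn(\kappa_i)(\psi^\lambda+\sum_j\kappa_jG(x_j^\lambda,\cdot))-c^\lambda_i$. Using H\"older and the Sobolev embedding $W^{1,1}(B_\delta(\bar x_i))\hookrightarrow L^2(B_\delta(\bar x_i))$, one bounds $sgn(\kappa_i)\int\omega^\lambda_i\zeta^\lambda_i=\lambda\int_{A^\lambda_i}\zeta^\lambda_i$ by $C(\int_{A^\lambda_i}|\nabla\zeta^\lambda_i|^2)^{1/2}$ plus a term that is $o(1)$ times the left-hand side (here $\lambda|A^\lambda_i|=|\kappa_i|$ is used to eliminate the apparent $\lambda$-dependence). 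Then, because Lemma~\ref{3-09} guarantees $\zeta^\lambda_i\le 0$ on $\partial B_{\delta_0}(\bar x_i)$, integration by parts gives $\int_{A^\lambda_i}|\nabla\zeta^\lambda_i|^2\le\int_{B_{\delta_0}(\bar x_i)}|\nabla(\zeta^\lambda_i)^+|^2=sgn(\kappa_i)\int\omega^\lambda_i\zeta^\lambda_i$. Combining the two yields a self-closing inequality of the form $X\le C\sqrt{X}$, hence $X\le C$. This Turkington-type argument is the missing mechanism: it uses the boundary sign condition from Lemma~\ref{3-09} rather than any sharp information on $c^\lambda_i$.
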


\begin{proof}
For simplicity we write \[\zeta^\lambda_i:=sgn(\kappa_i)(\psi^\lambda+\sum_{j=p+1}^k\kappa_j G(x_j^\lambda,\cdot))-c^\lambda_i.\]
It suffices to prove that $sgn(\kappa_i)\int_D\omega^\lambda_i\zeta^\lambda_idx\leq C$ for each $1\leq i\leq p.$

On the one hand, by H\"{o}lder's inequality and Sobolev embedding $W^{1,1}(B_\delta(\bar{x}_i))\hookrightarrow L^2(B_\delta(\bar{x}_i))$, we have
\begin{equation}
\begin{split}
&sgn(\kappa_i)\int_D\omega^\lambda_i\zeta^\lambda_idx
=\lambda\int_{A^\lambda_i}\zeta^\lambda_idx\\
\leq&\lambda|A^\lambda_i|^{\frac{1}{2}}(\int_{A^\lambda_i}|\zeta^\lambda_i|^2dx)^{\frac{1}{2}}\quad(\zeta^\lambda_i\geq0\,\, \text{on}\,\, A^\lambda_i)\\
\leq&\lambda|A^\lambda_i|^{\frac{1}{2}}(\int_{B_\delta(\bar{x}_i)}|(\zeta^\lambda_i)^+|^2dx)^{\frac{1}{2}}\\
\leq&C\lambda|A^\lambda_i|^{\frac{1}{2}}(\int_{B_\delta(\bar{x}_i)}(\zeta^\lambda_i)^+dx+\int_{B_\delta(\bar{x}_i)}|\nabla(\zeta^\lambda_i)^+|dx)\\
\leq&C\lambda|A^\lambda_i|^{\frac{1}{2}}(\int_{A^\lambda_i}\zeta^\lambda_idx+\int_{A^\lambda_i}|\nabla\zeta^\lambda_i|dx)\\
\leq&C\lambda|A^\lambda_i|^{\frac{1}{2}}|A^\lambda_i|^{\frac{1}{2}}(\int_{A^\lambda_i}|\nabla \zeta^\lambda_i|^2dx)^{\frac{1}{2}}+C|A^\lambda_i|^\frac{1}{2}sgn(\kappa_i)\int_D\omega^\lambda_i\zeta^\lambda_idx\\
\leq&C(\int_{A^\lambda_i}|\nabla \zeta^\lambda_i|^2dx)^{\frac{1}{2}}+o(1)sgn(\kappa_i)\int_D\omega^\lambda_i\zeta^\lambda_idx,
\end{split}
\end{equation}
which implies
\begin{equation}\label{3-100}
sgn(\kappa_i)\int_D\omega^\lambda_i\zeta^\lambda_idx\leq C(\int_{A^\lambda_i}|\nabla \zeta^\lambda_i|^2dx)^{\frac{1}{2}}.
\end{equation}

On the other hand, since $\zeta^\lambda_i\leq 0$ on $\partial B_{\delta_0}(\bar{x}_i)$(see \eqref{3-41}), integration by parts gives
\begin{equation}\label{3-101}
\begin{split}
sgn(\kappa_i)\int_D\omega^\lambda_i\zeta^\lambda_idx&=sgn(\kappa_i)\int_{B_{\delta_0}(\bar{x}_i)}\omega^\lambda_i(\zeta^\lambda_i)^+dx\\
&=\int_{B_{\delta_0}(\bar{x}_i)}|\nabla(\zeta^\lambda_i)^+|^2dx\\
&\geq \int_{A^\lambda_i}|\nabla\zeta^\lambda_i|^2dx.
\end{split}
\end{equation}

Combining \eqref{3-100} with \eqref{3-101} we complete the proof.
\end{proof}

\begin{lemma}\label{3-73}
$\sum_{i=1}^pc_i^\lambda|\kappa_i|\geq -\frac{1}{2\pi}\sum_{i=1}^p\kappa_i^2\ln\varepsilon_i-C.$
\end{lemma}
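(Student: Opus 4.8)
The plan is to relate the quantity $\sum_{i=1}^p c_i^\lambda|\kappa_i|$ to the background energy $E(\omega^\lambda)$ and to the kinetic energy $T^\lambda$, and then invoke the lower bound on $E(\omega^\lambda)$ from Lemma~\ref{3-71} together with the upper bound on $T^\lambda$ from Lemma~\ref{3-72}. The starting point is the algebraic identity obtained by expanding the definition of $T^\lambda$: since $\omega^\lambda_i = sgn(\kappa_i)\lambda I_{A^\lambda_i}$ and $\int_D\omega^\lambda_i = \kappa_i$, we have
\begin{equation}
T^\lambda = \frac12\sum_{i=1}^p\int_D\omega^\lambda_i\Big(\psi^\lambda + \sum_{j=p+1}^k\kappa_j G(x_j^\lambda,\cdot)\Big)dx - \frac12\sum_{i=1}^p c_i^\lambda|\kappa_i|,
\end{equation}
using $sgn(\kappa_i)\int_D\omega^\lambda_i c_i^\lambda\,dx = sgn(\kappa_i)\kappa_i c_i^\lambda = |\kappa_i|c_i^\lambda$. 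Rearranging,
\begin{equation}
\sum_{i=1}^p c_i^\lambda|\kappa_i| = \sum_{i=1}^p\int_D\omega^\lambda_i\psi^\lambda\,dx + \sum_{i=1}^p\sum_{j=p+1}^k\kappa_j\int_D\omega^\lambda_i G(x_j^\lambda,\cdot)\,dx - 2T^\lambda.
\end{equation}

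The first term on the right is $\int_D\omega^\lambda\psi^\lambda\,dx = 2E(\omega^\lambda)$, which by Lemma~\ref{3-71} is bounded below by $-\frac{1}{2\pi}\sum_{i=1}^p\kappa_i^2\ln\varepsilon_i - C$. The second term is harmless: since $supp(\omega^\lambda_i)\subset B_\delta(\bar x_i)$ and $x_j^\lambda\in\overline{B_\delta(\bar x_j)}$ with $\overline{B_\delta(\bar x_i)}\cap\overline{B_\delta(\bar x_j)}=\varnothing$ for the relevant indices, the Green's function $G(x_j^\lambda,\cdot)$ is bounded on $B_\delta(\bar x_i)$ uniformly in $\lambda$, so $\big|\sum_{i,j}\kappa_j\int_D\omega^\lambda_i G(x_j^\lambda,\cdot)\big|\leq C$. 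The third term is bounded below by $-2T^\lambda\geq -C$ by Lemma~\ref{3-72}. Combining the three estimates gives
\begin{equation}
\sum_{i=1}^p c_i^\lambda|\kappa_i| \geq -\frac{1}{2\pi}\sum_{i=1}^p\kappa_i^2\ln\varepsilon_i - C,
\end{equation}
which is the claim.

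The only genuinely substantive input is the energy lower bound of Lemma~\ref{3-71}; everything else is bookkeeping with uniformly bounded cross terms, exploiting the separation of the supports $B_\delta(\bar x_i)$. I expect no real obstacle here — the main point to be careful about is matching the logarithmic constant: Lemma~\ref{3-71} gives $E(\omega^\lambda)\geq -\frac{1}{4\pi}\sum\kappa_i^2\ln\varepsilon_i - C$, so $2E(\omega^\lambda)\geq -\frac{1}{2\pi}\sum\kappa_i^2\ln\varepsilon_i - C$, which is exactly the leading term in the desired lower bound, confirming the constants are consistent. One should also double-check the sign conventions in the definition of $T^\lambda$ (the factor $sgn(\kappa_i)/2$ and the bracket $sgn(\kappa_i)(\cdots)-c_i^\lambda$) when carrying out the expansion, but the $sgn(\kappa_i)^2 = 1$ cancellation makes this routine.
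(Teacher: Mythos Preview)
Your proposal is correct and follows essentially the same approach as the paper: both expand the definition of $T^\lambda$ to obtain the identity $T^\lambda = E(\omega^\lambda) + \tfrac12\sum_{i,j}\kappa_j\int_D\omega^\lambda_i G(x_j^\lambda,\cdot)\,dx - \tfrac12\sum_i c_i^\lambda|\kappa_i|$, then invoke Lemma~\ref{3-71} for the lower bound on $E(\omega^\lambda)$, Lemma~\ref{3-72} for the upper bound on $T^\lambda$, and the separation of the balls $B_\delta(\bar x_i)$ to bound the cross terms. The constant-matching you note ($2\cdot\tfrac{1}{4\pi}=\tfrac{1}{2\pi}$) is exactly right.
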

\begin{proof}
By the definition of $T^\lambda$, the following identity holds true
\[T^\lambda=E(\omega^\lambda)+\frac{1}{2}\sum_{i=1}^p\sum_{j=p+1}^k\int_D\omega^\lambda_i(x)\kappa_jG(x^\lambda_j,x)dx-\frac{1}{2}\sum_{i=1}^pc^\lambda_i|\kappa_i|.\]
Since $dist(B_{\delta_0}(\bar{x}_i),B_{\delta_0}(\bar{x}_j))>0$ for any $i=1,\cdot\cdot\cdot,p$ and $j=p+1,\cdot\cdot\cdot, k$, it is easy to see that
\[|\frac{1}{2}\sum_{i=1}^p\sum_{j=p+1}^k\int_D\omega^\lambda_i(x)\kappa_jG(x^\lambda_j,x)dx|\leq C,\]
then the desired result follows from Lemma \ref{3-71} and Lemma \ref{3-72}.
\end{proof}

Now we are ready to estimate the size of $supp(\omega^\lambda_i)$.

\begin{lemma}

There exists $R_0>0$ such that $diam(supp(\omega^\lambda_i))\leq R_0\varepsilon_i$, $i=1,\cdot\cdot\cdot,p.$
\end{lemma}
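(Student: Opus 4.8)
The plan is to bound the diameter of $supp(\omega^\lambda_i)$ — equivalently of $A^\lambda_i$ — by combining the upper bound on the kinetic energy $T^\lambda$ (Lemma \ref{3-72}) with the lower bound on $\sum_i c^\lambda_i|\kappa_i|$ (Lemma \ref{3-73}) and the elementary fact that $|A^\lambda_i|=|\kappa_i|/\lambda=\pi\varepsilon_i^2$ (from the patch representation in Lemma \ref{3-007} and the mass constraint). The key quantity to control is $\zeta^\lambda_i:=sgn(\kappa_i)(\psi^\lambda+\sum_{j=p+1}^k\kappa_jG(x^\lambda_j,\cdot))-c^\lambda_i$, which is nonnegative on $A^\lambda_i$, vanishes on $\partial A^\lambda_i$ (inside $B_\delta(\bar x_i)$), and satisfies $-\Delta\zeta^\lambda_i=sgn(\kappa_i)\omega^\lambda_i=\lambda$ on $A^\lambda_i$. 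Thus $\zeta^\lambda_i$ solves a torsion-type problem on the set $A^\lambda_i$, and we want to say that a set carrying a solution of $-\Delta u=\lambda$, $u>0$, $u=0$ on the (relative) boundary, with $\int_{A^\lambda_i}\lambda\,u\,dx = sgn(\kappa_i)\int_D\omega^\lambda_i\zeta^\lambda_i\,dx \le C$ and $|A^\lambda_i|=\pi\varepsilon_i^2$, must have diameter $\lesssim\varepsilon_i$.

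First I would make precise the localization: by Lemma \ref{3-09}, $\zeta^\lambda_i\le 0$ on $\partial B_{\delta_0}(\bar x_i)$, so by the maximum principle applied to the superharmonic-where-positive function $(\zeta^\lambda_i)^+$ on $B_{\delta_0}(\bar x_i)$, the positivity set $\{\zeta^\lambda_i>0\}$ (hence $A^\lambda_i$, and hence $supp(\omega^\lambda_i)$ up to a null set) is compactly contained in $B_{\delta_0}(\bar x_i)$; moreover $A^\lambda_i=\{x\in D\mid \zeta^\lambda_i(x)>0\}$ exactly, since outside $B_\delta(\bar x_i)$ the positivity set is empty. So we may work on all of $\{\zeta^\lambda_i>0\}\Subset B_{\delta_0}(\bar x_i)$ without the cutoff. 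Next, I would quantify the ``smallness'' of $A^\lambda_i$. From \eqref{3-100}–\eqref{3-101} we already have $\int_{A^\lambda_i}|\nabla\zeta^\lambda_i|^2\,dx = sgn(\kappa_i)\int_D\omega^\lambda_i\zeta^\lambda_i\,dx \le C$, and by Sobolev embedding $W^{1,1}\hookrightarrow L^2$ on $B_\delta(\bar x_i)$ together with the computation already carried out in Lemma \ref{3-72}, one gets $\|(\zeta^\lambda_i)^+\|_{L^2(B_\delta(\bar x_i))}\le C|A^\lambda_i|^{1/2}\le C\varepsilon_i$, and thence $sgn(\kappa_i)\int_D\omega^\lambda_i\zeta^\lambda_i\,dx = \lambda\int_{A^\lambda_i}\zeta^\lambda_i\,dx \le \lambda|A^\lambda_i|^{1/2}\|(\zeta^\lambda_i)^+\|_{L^2}\le C\lambda\varepsilon_i^2 = C|\kappa_i|\le C$. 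So the kinetic energy localized to $A^\lambda_i$ is $O(1)$, consistently with $T^\lambda\le C$.

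The main step — and the main obstacle — is turning the measure bound $|A^\lambda_i|\le\pi\varepsilon_i^2$ plus the elliptic equation into a \emph{diameter} bound. The idea is a rescaling and compactness (or explicit barrier) argument: fix any point $x_0\in A^\lambda_i$ and compare $\zeta^\lambda_i$ near $x_0$ with the solution $w$ of $-\Delta w = \lambda$ on the ball $B_r(x_0)$ with $w=0$ on $\partial B_r(x_0)$, namely $w(x)=\frac{\lambda}{4}(r^2-|x-x_0|^2)$; since $\zeta^\lambda_i \le 0$ on $\partial(A^\lambda_i\cap B_r(x_0))$ wherever that boundary meets $\partial A^\lambda_i$, one controls $\zeta^\lambda_i$ from above and extracts that $A^\lambda_i$ cannot ``reach out'' farther than $O(\varepsilon_i)$ from $x_0$ without violating either the $L^2$ bound on $\zeta^\lambda_i$ or the measure bound; concretely, if $diam(A^\lambda_i)\ge R\varepsilon_i$ then $A^\lambda_i$ contains points at mutual distance $R\varepsilon_i$, forcing (by the quantitative connectedness of a superlevel set of a subharmonic function, or by a chaining of balls on which $\zeta^\lambda_i$ is bounded below using the torsion estimate) a contribution to $\lambda\int_{A^\lambda_i}\zeta^\lambda_i$ or to $|A^\lambda_i|$ of order $\ge c(R)\,|\kappa_i|$ with $c(R)\to\infty$, contradicting the uniform bounds once $R=R_0$ is large enough. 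I expect the cleanest route is the De Giorgi–type iteration / rescaling used in the Euler-patch literature (cf.\ the references \cite{T,CPY}): set $\tilde\zeta(y):=\varepsilon_i^{-2}\zeta^\lambda_i(\bar x_i+\varepsilon_i y)$, which solves $-\Delta\tilde\zeta = |\kappa_i|/(\pi\varepsilon_i^2)\cdot\varepsilon_i^2 = |\kappa_i|/\pi$ on the rescaled patch $\tilde A$ of unit measure, with $\int_{\tilde A}|\nabla\tilde\zeta|^2\le C$ and $\|\tilde\zeta\|_{L^2}\le C$ — a compactness class independent of $\lambda$ — and then a standard argument shows such $\tilde A$ has uniformly bounded diameter $R_0$. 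Writing the chaining/iteration carefully is the part that needs real work; everything else is bookkeeping already done in Lemmas \ref{3-71}–\ref{3-73}.
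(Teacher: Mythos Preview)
Your approach diverges from the paper's and, as written, has a genuine gap. After rescaling you assemble the bounds $|\tilde A|=\pi$, $\int_{\tilde A}|\nabla\tilde\zeta|^2\le C$, $\|\tilde\zeta\|_{L^2}\le C$, together with $-\Delta\tilde\zeta=|\kappa_i|/\pi$ on $\tilde A$ and $\tilde\zeta=0$ on $\partial\tilde A$, and then claim this is a ``compactness class'' forcing bounded diameter. It is not. Take a long thin rectangle $\tilde A=(-L,L)\times(-w,w)$ with $4Lw=\pi$: its torsion function satisfies $\|\tilde\zeta\|_\infty\le Cw^2$, hence $\int_{\tilde A}\tilde\zeta\le Cw^2$ and $\int_{\tilde A}|\nabla\tilde\zeta|^2=\frac{|\kappa_i|}{\pi}\int_{\tilde A}\tilde\zeta\le Cw^2\to0$, while $diam(\tilde A)\to\infty$. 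So fixed measure plus bounded (even vanishing) torsion energy does not control diameter, and neither your barrier sketch nor your chaining sketch rules this configuration out---note also that $A^\lambda_i$ need not be connected, and $\zeta^\lambda_i$ is \emph{super}harmonic on $A^\lambda_i$, not subharmonic, so ``quantitative connectedness of a superlevel set of a subharmonic function'' is not available. You list Lemma~\ref{3-73} in the plan but never actually deploy it; that omission is precisely the gap.

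The paper's argument (the classical Turkington device, cf.\ \cite{T}) uses Lemma~\ref{3-73} \emph{pointwise}. For any $x_0\in supp(\omega^\lambda_i)$ one has $sgn(\kappa_i)\big(\psi^\lambda(x_0)+\sum_{j>p}\kappa_jG(x^\lambda_j,x_0)\big)\ge c^\lambda_i$; stripping off the uniformly bounded regular pieces leaves the Newtonian potential bound $\int_D-\frac{1}{2\pi}\ln|x_0-y|\,|\omega^\lambda_i(y)|\,dy\ge c^\lambda_i-C$. Multiplying by $|\kappa_i|$, summing over $i$, and inserting the lower bound from Lemma~\ref{3-73} gives $\sum_i\frac{|\kappa_i|}{2\pi}\int_D\ln\frac{\varepsilon_i}{|x_0-y|}\,|\omega^\lambda_i(y)|\,dy\ge -C$. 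The contribution from $\{|x_0-y|<R\varepsilon_i\}$ is bounded independently of $R$ and $\lambda$, so the far piece yields $\int_{D\setminus B_{R\varepsilon_i}(x_0)}|\omega^\lambda_i|\,dy\le C/\ln R$. Choosing $R$ large makes this $<|\kappa_i|/2$; thus every ball $B_{R\varepsilon_i}(x_0)$ centered at a support point carries more than half the total mass, two such balls must overlap, and $diam(supp(\omega^\lambda_i))\le 2R\varepsilon_i$. The ingredient your proposal lacks is exactly this pointwise use of the lower bound on $c^\lambda_i$: it is what excludes thin elongated patches, which the integral bounds alone cannot do.
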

\begin{proof}
For any $x_i\in supp(\omega^\lambda_i)$, $i=1,\cdot\cdot\cdot,p$, we have
\[sgn(\kappa_i)(\psi^\lambda(x_i)+\sum_{j=p+1}^k\kappa_jG(x^\lambda_j,x_i))\geq c^\lambda_i.\]
It is easy to see that
\[|\sum_{j=p+1}^k\kappa_jG(x^\lambda_j,x_i)|\leq C,\]
so we have
\[sgn(\kappa_i)\psi^\lambda(x_i)\geq c^\lambda_i-C,\]
which gives
\begin{equation}\label{3-073}
\int_D-\frac{1}{2\pi}\ln|x_i-y||\omega^\lambda_i(y)|dy\geq c^\lambda_i-C.
\end{equation}
Combining \eqref{3-073} with Lemma \ref{3-73} we obtain
\[\sum_{i=1}^p|\kappa_i|\int_D-\frac{1}{2\pi}\ln|x_i-y||\omega^\lambda_i(y)|dy\geq-\frac{1}{2\pi}\sum_{i=1}^p\kappa_i^2\ln\varepsilon_i-C,\]
or equivalently
\[\sum_{i=1}^p\frac{|\kappa_i|}{2\pi}\int_D\ln\frac{\varepsilon_i}{|x_i-y|}|\omega^\lambda_i(y)|dy\geq-C.\]
So for any $R>1$ to be determined, we have
\begin{equation}\label{3-77}
\begin{split}
&\sum_{i=1}^p\frac{|\kappa_i|}{2\pi}\int_{D\setminus B_{R\varepsilon_i}(x_i)}\ln\frac{\varepsilon_i}{|x_i-y|}|\omega^\lambda_i(y)|dy\\
+&\sum_{i=1}^p\frac{|\kappa_i|}{2\pi}\int_{B_{R\varepsilon_i}(x_i)}\ln\frac{\varepsilon_i}{|x_i-y|}|\omega^\lambda_i(y)|dy\geq-C.
\end{split}
\end{equation}
The second integral in \eqref{3-77} is bounded(in fact, it can be calculated explicitly), that is,
\[|\sum_{i=1}^p\frac{|\kappa_i|}{2\pi}\int_{B_{R\varepsilon_i}(x_i)}\ln\frac{\varepsilon_i}{|x_i-y|}|\omega^\lambda_i(y)|dy|\leq C,\]
so we get
\begin{equation}
\sum_{i=1}^p\frac{|\kappa_i|}{2\pi}\int_{D\setminus B_{R\varepsilon_i}(x_i)}\ln\frac{\varepsilon_i}{|x_i-y|}|\omega^\lambda_i(y)|dy\geq-C,
\end{equation}
which implies
\begin{equation}
\sum_{i=1}^p\frac{|\kappa_i|}{2\pi}\int_{D\setminus B_{R\varepsilon_i}(x_i)}|\omega^\lambda_i(y)|dy\leq \frac{C}{\ln R},
\end{equation}
so for each $1\leq i\leq p$,
\begin{equation}
\frac{|\kappa_i|}{2\pi}\int_{D\setminus B_{R\varepsilon_i}(x_i)}|\omega^\lambda_i(y)|dy\leq \frac{C}{\ln R}.
\end{equation}
Since $\int_D|\omega^\lambda_i(y)|dy=|\kappa_i|$, by choosing $R$ large enough we get
\begin{equation}\label{3-80}
\int_{B_{R\varepsilon_i}(x_i)}|\omega^\lambda_i(y)|dy> \frac{|\kappa_i|}{2},\quad i=1,\cdot\cdot\cdot,p.
\end{equation}

Now we claim that \[diam(supp(\omega^\lambda_i))\leq 2R\varepsilon_i.\]
In fact, supposing that $diam(supp(\omega^\lambda_i))> 2R\varepsilon_i$, we can choose $x_i,y_i\in supp(\omega^\lambda_i)$ such that $|x_i-y_i|>2R\varepsilon_i$, then by \eqref{3-80}(recall that in \eqref{3-80} $x_i\in supp(\omega^\lambda_i)$ is arbitrary)
\[\int_D|\omega^\lambda_i(y)|dy\geq\int_{B_{R\varepsilon_i}(x_i)}|\omega^\lambda_i(y)|dy+\int_{B_{R\varepsilon_i}(y_i)}|\omega^\lambda_i(y)|dy>|\kappa_i|,\]
which is a contradiction.

By choosing $R_0=2R$ we complete the proof.

\end{proof}

By now we have constructed $\omega^\lambda_i\in N_p^\lambda, i=1,\cdot\cdot\cdot,p$, and $x^\lambda_j\in \overline{B_\delta(\bar{x}_j)}, j=p+1\cdot\cdot\cdot,k$. Moreover, it is proved that the diameter of $supp(\omega^\lambda_i)$ vanishes as $\lambda\rightarrow+\infty.$ To analyze their limiting positions, we define the center of $\omega^\lambda_i$ as follows
\[z^\lambda_i:=\frac{1}{\kappa_i}\int_Dx\omega^\lambda_idx,\quad i=1,\cdot\cdot\cdot,p.\]
It is easy to see that $z^\lambda_i\in \overline{B_\delta(\bar{x}_i)}$.
\begin{lemma}\label{3-99}
$z^\lambda_i\rightarrow\bar{x}_i$ for $1\leq i\leq p$ and $x^\lambda_j\rightarrow \bar{x}_j$ for $p+1\leq j\leq k$ as $\lambda\rightarrow+\infty.$
\end{lemma}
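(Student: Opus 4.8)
The strategy is the standard one for the vorticity method: use the variational characterization of $(\omega^\lambda, x^\lambda_{p+1}, \ldots, x^\lambda_k)$ to show that the energy $\mathcal{E}(\omega^\lambda, x^\lambda_{p+1}, \ldots, x^\lambda_k)$ is, up to an explicit $\lambda$-dependent constant and an $o(1)$ error, bounded below by $-\frac{1}{4\pi}\sum_i \kappa_i^2 \ln \varepsilon_i$ plus the quantity $-\frac{1}{2}\mathcal{K}_k$ evaluated at the limiting positions of the patches and the point vortices. More precisely, I would first establish upper and lower bounds for $\mathcal{E}(\omega^\lambda,x^\lambda_{p+1},\ldots,x^\lambda_k)$. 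For the lower bound, I plug in the test configuration consisting of the small discs $\omega_i = \mathrm{sgn}(\kappa_i)\lambda I_{B_{\varepsilon_i}(\bar x_i)}$ together with the point vortices placed exactly at $\bar x_{p+1},\ldots,\bar x_k$; a direct computation (as in Lemma \ref{3-71} and Lemma \ref{3-22}) gives
\[
\mathcal{E}(\omega^\lambda,x^\lambda_{p+1},\ldots,x^\lambda_k) \geq -\frac{1}{4\pi}\sum_{i=1}^p \kappa_i^2 \ln\varepsilon_i - \frac{1}{2}\mathcal{K}_k(\bar x_1,\ldots,\bar x_k) + o(1).
\]

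For the upper bound, I would decompose $\mathcal{E}(\omega^\lambda,x^\lambda_{p+1},\ldots,x^\lambda_k)$ by expanding $E(\omega^\lambda) = \sum_i E(\omega^\lambda_i) + \sum_{i<j}\iint G\,\omega^\lambda_i\omega^\lambda_j$. Since each $\mathrm{supp}(\omega^\lambda_i)$ has diameter $O(\varepsilon_i) = O(\lambda^{-1/2})$ and the supports stay in the disjoint balls $B_\delta(\bar x_i)$, the cross terms $\iint G(x,y)\omega^\lambda_i(x)\omega^\lambda_j(y)\,dx\,dy$ converge to $-\kappa_i\kappa_j G(z^\lambda_i, z^\lambda_j) + o(1)$ for $i\neq j$ among $1\le i,j\le p$, the coupling terms $\kappa_j\psi^\lambda(x^\lambda_j) = \kappa_j\sum_i \iint G \omega^\lambda_i \to -\sum_i \kappa_i\kappa_j G(z^\lambda_i, x^\lambda_j) + o(1)$, and for the diagonal terms one writes $E(\omega^\lambda_i) = -\frac{\kappa_i^2}{2\pi}$-logarithmic-self-energy $-\kappa_i^2 h(z^\lambda_i,z^\lambda_i) + o(1)$, where the self-energy part is bounded above by $-\frac{\kappa_i^2}{4\pi}\ln\varepsilon_i + C$ by the rearrangement/isoperimetric inequality for the logarithmic kernel (the disc maximizes $E$ among $L^\infty$ functions with the given bound and mass). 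Assembling these, and collecting the point-vortex interaction and Robin terms already present in $\mathcal{E}$, yields
\[
\mathcal{E}(\omega^\lambda,x^\lambda_{p+1},\ldots,x^\lambda_k) \leq -\frac{1}{4\pi}\sum_{i=1}^p \kappa_i^2 \ln\varepsilon_i - \frac{1}{2}\mathcal{K}_k(\zeta^\lambda) + o(1),
\]
where $\zeta^\lambda := (z^\lambda_1,\ldots,z^\lambda_p, x^\lambda_{p+1},\ldots,x^\lambda_k)$.

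Combining the two bounds gives $\mathcal{K}_k(\zeta^\lambda) \leq \mathcal{K}_k(\bar x_1,\ldots,\bar x_k) + o(1)$. Since $\zeta^\lambda$ ranges over the compact set $\overline{B_\delta(\bar x_1)}\times\cdots\times\overline{B_\delta(\bar x_k)}$ on which, by our choice of $\delta < \delta_0/2$, $(\bar x_1,\ldots,\bar x_k)$ is the unique minimum of $\mathcal{K}_k$, any subsequential limit $\zeta^*$ of $\zeta^\lambda$ satisfies $\mathcal{K}_k(\zeta^*) \le \mathcal{K}_k(\bar x)$, forcing $\zeta^* = (\bar x_1,\ldots,\bar x_k)$ by strict minimality and the continuity of $\mathcal{K}_k$ on this compact set (note the diagonal is avoided because the closed balls are pairwise disjoint). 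Since every subsequence has a further subsequence converging to $(\bar x_1,\ldots,\bar x_k)$, the whole sequence converges, which gives both $z^\lambda_i\to\bar x_i$ for $1\le i\le p$ and $x^\lambda_j\to\bar x_j$ for $p+1\le j\le k$.

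The main obstacle is the careful bookkeeping in the upper bound: one must control the self-energy term $E(\omega^\lambda_i)$ without assuming any a priori shape for $A^\lambda_i$ beyond the diameter estimate $\mathrm{diam}(\mathrm{supp}(\omega^\lambda_i)) \le R_0\varepsilon_i$, and show that replacing $h(x,y)$ by $h(z^\lambda_i, z^\lambda_i)$ and the interaction kernels $G(x,y)$ by $G(z^\lambda_i, z^\lambda_j)$ (resp.\ $G(z^\lambda_i, x^\lambda_j)$) costs only $o(1)$; this uses uniform continuity of $h$ and of $G$ away from the diagonal together with the shrinking supports and $\int\omega^\lambda_i = \kappa_i$. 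The upper bound on the logarithmic self-interaction $-\frac{1}{4\pi}\iint \ln|x-y|\,\omega^\lambda_i(x)\omega^\lambda_i(y)\,dx\,dy \le -\frac{\kappa_i^2}{4\pi}\ln\varepsilon_i + C$ follows from the fact that among all $f$ with $0\le \mathrm{sgn}(\kappa_i)f \le \lambda$ and $\int f = \kappa_i$, the characteristic function of a ball maximizes this quadratic form (Riesz rearrangement), which pins down the leading-order constant and makes the two energy bounds match.
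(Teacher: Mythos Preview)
Your proposal is correct and follows essentially the same route as the paper: compare $\mathcal{E}$ at the maximizer against $\mathcal{E}$ at the test configuration $(\sum_i sgn(\kappa_i)\lambda I_{B_{\varepsilon_i}(\bar x_i)},\bar x_{p+1},\ldots,\bar x_k)$, expand both sides using the diameter estimate $\mathrm{diam}(\mathrm{supp}\,\omega^\lambda_i)\le R_0\varepsilon_i$ to replace $h$ and the off-diagonal $G$ by their values at the centers up to $o(1)$, invoke the Riesz rearrangement inequality so the logarithmic self-energies cancel, and conclude $\mathcal{K}_k(\zeta^\lambda)\le\mathcal{K}_k(\bar x)+o(1)$ together with strict minimality on the compact product of closed balls. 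Two harmless sign slips: the cross terms $\iint G\,\omega^\lambda_i\omega^\lambda_j$ converge to $+\kappa_i\kappa_j G(z^\lambda_i,z^\lambda_j)$ (not $-$), and likewise $\kappa_j\psi^\lambda(x^\lambda_j)\to +\sum_i\kappa_i\kappa_jG(z^\lambda_i,x^\lambda_j)$; once corrected the bookkeeping matches the paper exactly.
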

\begin{proof}
Up to a subsequence we assume that $z^\lambda_i\rightarrow z_i\in \overline{B_\delta(\bar{x}_i)}, 1\leq i\leq p$, and
$x^\lambda_j\rightarrow z_j\in \overline{B_\delta(\bar{x}_j)}, p+1\leq j\leq k$. It suffices to show that $(z_1,\cdot\cdot\cdot,z_k)=(\bar{x}_1,\cdot\cdot\cdot,\bar{x}_k)$.

Define a test function $\omega=\sum_{i=1}^p\omega_i,$ where $\omega_i=sgn(\kappa_i)\lambda I_{B_{\varepsilon_i}(\bar{x}_i)}$. It is easy to see that $\omega\in N_p^\lambda$, so we have
\[\mathcal{E}(\omega,\bar{x}_{p+1},\cdot\cdot\cdot,\bar{x}_k)\leq \mathcal{E}(\omega^\lambda,{x}^\lambda_{p+1},\cdot\cdot\cdot,{x}^\lambda_k),\]
that is,
\begin{equation}
\begin{split}
&E(\omega)+\sum_{j=p+1}^k\kappa_j\psi(\bar{x}_j)+\frac{1}{2}\sum_{\begin{subarray}\quad \quad \,\,i\neq j\\p+1\leq i,j\leq k\end{subarray}}\kappa_i\kappa_jG(\bar{x}_i,\bar{x}_j)-\sum_{j=p+1}^k\kappa_j^2H(\bar{x}_j)\\
\leq & E(\omega^\lambda)+\sum_{j=p+1}^k\kappa_j\psi^\lambda({x}^\lambda_j)+\frac{1}{2}\sum_{\begin{subarray}\quad \quad \,\,i\neq j\\p+1\leq i,j\leq k\end{subarray}}\kappa_i\kappa_jG({x}^\lambda_i,{x}^\lambda_j)-\sum_{j=p+1}^k\kappa_j^2H({x}^\lambda_j).
\end{split}
\end{equation}
where $\psi=(-\Delta)^{-1}\omega$ and $\psi^\lambda=(-\Delta)^{-1}\omega^\lambda$. It is easy to check that
\begin{equation}
\begin{split}
E(\omega)&=\frac{1}{2}\sum_{i=1}^p\int_D\int_D-\frac{1}{2\pi}\ln|x-y|\omega_i(x)\omega_i(y)dxdy-\sum_{i=1}^p\kappa_i^2H(\bar{x}_i)\\
&+\frac{1}{2}\sum_{\begin{subarray}\quad \,\,\,\,\,\,i\neq j\\1\leq i,j\leq p\end{subarray}}\kappa_i\kappa_jG(\bar{x}_i,\bar{x}_j)+o(1),
\end{split}
\end{equation}
and
\begin{equation}
\sum_{j=p+1}^k\kappa_j\psi(\bar{x}_j)=\sum_{i=1}^p\sum_{j=p+1}^k\kappa_i\kappa_jG(\bar{x}_i,\bar{x}_j)+o(1),
\end{equation}
henceforth we use $o(1)$ to denote quantities that go to 0 as $\lambda\rightarrow+\infty.$ Similarly
\begin{equation}
\begin{split}
E(\omega^\lambda)&=\frac{1}{2}\sum_{i=1}^p\int_D\int_D-\frac{1}{2\pi}\ln|x-y|\omega^\lambda_i(x)\omega^\lambda_i(y)dxdy-\sum_{i=1}^p\kappa_i^2H(z_i)\\
&+\frac{1}{2}\sum_{\begin{subarray}\quad \,\,\,\,\,\, i\neq j\\1\leq i,j\leq p\end{subarray}}\kappa_i\kappa_jG(z_i,z_j)+o(1),
\end{split}
\end{equation}
and
\begin{equation}
\sum_{j=p+1}^k\kappa_j\psi^\lambda({x}^\lambda_j)=\sum_{i=1}^p\sum_{j=p+1}^k\kappa_i\kappa_jG(z_i,z_j)+o(1).
\end{equation}
Hence from all the above we obtain
\begin{equation}
\begin{split}
&\frac{1}{2}\sum_{i=1}^p\int_D\int_D-\frac{1}{2\pi}\ln|x-y|\omega_i(x)\omega_i(y)dxdy-\sum_{i=1}^p\kappa_i^2H(\bar{x}_i)\\
&+\frac{1}{2}\sum_{\begin{subarray}\quad \,\,\,\,\,\,i\neq j\\1\leq i,j\leq p\end{subarray}}\kappa_i\kappa_jG(\bar{x}_i,\bar{x}_j)
+\sum_{i=1}^p\sum_{j=p+1}^k\kappa_i\kappa_jG(\bar{x}_i,\bar{x}_j)\\
&+\frac{1}{2}\sum_{\begin{subarray}\quad \quad \,\,i\neq j\\p+1\leq i,j\leq k\end{subarray}}\kappa_i\kappa_jG(\bar{x}_i,\bar{x}_j)-\sum_{j=p+1}^k\kappa_j^2H(\bar{x}_j)\\
\leq & \frac{1}{2}\sum_{i=1}^p\int_D\int_D-\frac{1}{2\pi}\ln|x-y|\omega^\lambda_i(x)\omega^\lambda_i(y)dxdy-\sum_{i=1}^p\kappa_i^2H(z_i)\\
&+\frac{1}{2}\sum_{\begin{subarray}\quad \,\,\,\,\,\, i\neq j\\1\leq i,j\leq p\end{subarray}}\kappa_i\kappa_jG(z_i,z_j)
+\sum_{i=1}^p\sum_{j=p+1}^k\kappa_i\kappa_jG(z_i,z_j)\\
&+\frac{1}{2}\sum_{\begin{subarray}\quad \quad \,\,i\neq j\\p+1\leq i,j\leq k\end{subarray}}\kappa_i\kappa_jG(z_i,z_j)-\sum_{j=p+1}^k\kappa_j^2H(z_j)+o(1).
\end{split}
\end{equation}
On the other hand, by Riesz's rearrangement inequality
\begin{equation}
\begin{split}
&\int_D\int_D-\frac{1}{2\pi}\ln|x-y|\omega^\lambda_i(x)\omega^\lambda_i(y)dxdy\\
\leq& \int_D\int_D-\frac{1}{2\pi}\ln|x-y|\omega_i(x)\omega_i(y)dxdy,\;\;i=1,\cdot\cdot\cdot,p,
\end{split}
\end{equation}
so we get
\begin{equation}
\begin{split}
&\frac{1}{2}\sum_{\begin{subarray}\quad\,\,\,\,\,\, i\neq j\\1\leq i,j\leq k\end{subarray}}\kappa_i\kappa_jG(\bar{x}_i,\bar{x}_j)-\sum_{j=1}^k\kappa_j^2H(\bar{x}_j)\\
\leq &\frac{1}{2}\sum_{\begin{subarray}\quad \quad \,\,i\neq j\\1\leq i,j\leq k\end{subarray}}\kappa_i\kappa_jG(z_i,z_j)-\sum_{j=1}^k\kappa_j^2H(z_j),
\end{split}
\end{equation}
or equivalently
\begin{equation}
\mathcal{K}_k(z_1,\cdot\cdot\cdot,z_k)\leq \mathcal{K}_k(\bar{x}_1,\cdot\cdot\cdot,\bar{x}_k).
\end{equation}
Since $(\bar{x}_1,\cdot\cdot\cdot,\bar{x}_k)$ is the unique minimum point for $\mathcal{K}_k$ on $\overline{B_\delta(\bar{x}_1)}\times\cdot\cdot\cdot\times\overline{B_\delta(\bar{x}_k)}$, we obtain
\[(z_1,\cdot\cdot\cdot,z_k)=(\bar{x}_1,\cdot\cdot\cdot,\bar{x}_k),\]
which completes the proof.
\end{proof}
\begin{remark}
It is easy to see that $x^\lambda_j\in B_\delta(\bar{x}_j)$ for $ j=p+1,\cdot\cdot\cdot,k,$ and $dist(supp(\omega_i^\lambda),\partial B_\delta(\bar{x}_i))>0$ for $i=1,\cdot\cdot\cdot,p$, provided that $\lambda$ is sufficiently large.
\end{remark}
\section{Proof of Theorem \ref{230}}
Having made all preparation in the preceding sections, in this section we finish the proof of Theorem \ref{230}.

\begin{proof}[Proof of Theorem \ref{230}]
We need only to prove that $(\omega^\lambda,x^\lambda_{p+1},\cdot\cdot\cdot,x^\lambda_k)$ satisfies \eqref{29} if $\lambda$ is sufficiently large.

Firstly by Lemma \ref{3-99}, $x^\lambda_j\in B_\delta(\bar{x}_j)$ for $ j=p+1,\cdot\cdot\cdot,k,$ then by \eqref{36} we have

\[\nabla(\psi^\lambda(x)+\sum_{j=p+1,j\neq i}^k\kappa_jG({x}^\lambda_j,x)-\kappa_i H(x))\big|_{x={x}^\lambda_i}=0,\,\,j=p+1,\cdot\cdot\cdot,k.\]

Now for any given $\phi\in C^{\infty}_c(D)$, define a family of transformations $\Phi_t(x), t\in(-\infty,+\infty)$, from $D$ to $D$ by the following ordinary differential equation:
\begin{equation}\label{400}
\begin{cases}\frac{d\Phi_t(x)}{dt}=J\nabla\phi(\Phi_t(x)),\,\,\,t\in\mathbb R, \\
\Phi_0(x)=x.
\end{cases}
\end{equation}
Since $J\nabla\phi$ is a smooth vector field with compact support in $D$, $\eqref{400}$ is solvable for all $t\in \mathbb{R}$. It's also easy to see that $J\nabla\phi$ is divergence-free, so by Liouville theorem(see \cite{MPu}, Appendix 1.1) $\Phi_t(x)$ is area-preserving, or equivalently for any measurable set $A\subset D$
\begin{equation}
|\Phi_t(A)|=|A|.
\end{equation}
 Now define a family of test function
\begin{equation}
\omega^\lambda_t(x):=\omega^\lambda(\Phi_{-t}(x)).
\end{equation}
Since $\Phi_t$ is area-preserving and $dist(supp(\omega_i^\lambda),\partial B_\delta(\bar{x}_i))>0$ for each $1\leq i\leq p$, for $|t|$ is sufficiently small we have
$\omega^\lambda_t\in N_p^\lambda.$
Then by \eqref{35} we have
\[\frac{d}{dt}(E(\omega^\lambda_t)+\sum_{j=p+1}^k\kappa_j\psi^\lambda_t(x^\lambda_j))\big|_{t=0}=0,\]
where $\psi^\lambda_t=(-\Delta)^{-1}\omega^\lambda_t.$
It is easy to check that(see \cite{T} for example)
\[E(\omega^\lambda_t)=E(\omega^\lambda)+t\int_D\omega^\lambda(x)\partial(\psi^\lambda(x),\phi(x))dx+o(t),\]
and
\[\psi^\lambda_t(x^\lambda_j)=\psi^\lambda(x^\lambda_j)+t\int_D\omega^\lambda(x)\partial(G(x^\lambda_j,x),\phi(x))dx+o(t),\]
where $\frac{o(t)}{t}\rightarrow0$ as $t\rightarrow0$, therefore we get
\[\int_D\omega^\lambda(x)\partial(\psi^\lambda(x)+\sum_{j=p+1}^k\kappa_jG(x^\lambda_j,x),\phi(x))dx=0,\]
which completes the proof of Theorem \ref{230}.

\end{proof}

\smallskip

\renewcommand\refname{References}
\renewenvironment{thebibliography}[1]{%
\section*{\refname}
\list{{\arabic{enumi}}}{\def\makelabel##1{\hss{##1}}\topsep=0mm
\parsep=0mm
\partopsep=0mm\itemsep=0mm
\labelsep=1ex\itemindent=0mm
\settowidth\labelwidth{\small[#1]}%
\leftmargin\labelwidth \advance\leftmargin\labelsep
\advance\leftmargin -\itemindent
\usecounter{enumi}}\small
\def\newblock{\ }
\sloppy\clubpenalty4000\widowpenalty4000
\sfcode`\.=1000\relax}{\endlist}
\bibliographystyle{model1b-num-names}

\end{document}